\theoremstyle{plain}
\newtheorem{theorem}{Theorem}[section]
\newtheorem{corollary}[theorem]{Corollary}
\newtheorem{lemma}[theorem]{Lemma}
\newtheorem{proposition}[theorem]{Proposition}
\theoremstyle{definition}
\newtheorem{definition}[theorem]{Definition}
\theoremstyle{remark}
\newtheorem{remark}[theorem]{Remark}
\def\thm@space@setup{%
  \thm@preskip=\parskip \thm@postskip=0pt
}
\DeclareMathOperator*{\esssup}{ess\,sup}
\DeclareMathOperator*{\essinf}{ess\,inf}
\DeclareMathOperator{\tr}{tr}
\newcommand{\cO}{\mathcal{O}}
\newcommand{\cL}{\mathcal{L}}
\newcommand{\cM}{\mathcal{M}}
\newcommand{\cS}{\mathcal{S}}
\newcommand{\bP}{\mathbb{P}}
\newcommand{\bR}{\mathbb{R}}
\newcommand{\bN}{\mathbb{N}}
\newcommand{\sF}{\mathscr{F}}
\newcommand{\sP}{\mathscr{P}}
\makeatletter\@addtoreset{equation}{section} \makeatother
\begin{document}

\title{{A Non-Markovian Liquidation Problem and Backward SPDEs with Singular Terminal Conditions}\thanks{We thank seminar participants at various institution for valuable comments and suggestion. Financial support from the CRC 649 \textit{Economic Risk} and \textit{d-fine} GmbH is gratefully acknowledged. A first version of this paper was finished while Horst was visiting the Hausdorff Research Institute for Mathematics. Grateful acknowledgement is made for hospitality.}}

\author{Paulwin Graewe$^{1}$ \and Ulrich Horst$^{2}$ \and Jinniao Qiu$^{1}$}
\date{}

\footnotetext[1]{Department of Mathematics, Humboldt-Universit\"at zu Berlin, Unter den Linden~6, 10099~Berlin, Germany. \textit{E-mail}: \texttt{graewe@math.hu-berlin.de} (P.\ Graewe), \texttt{qiujinn@gmail.com} (J.\ Qiu).}
\footnotetext[2]{Department of Mathematics and School of Business and Economics, Humboldt-Universit\"at zu Berlin, Unter den Linden~6, 10099~Berlin, Germany.
\textit{E-mail}: \texttt{horst@math.hu-berlin.de}; corresponding author.}

\maketitle

\begin{abstract}
We establish existence, uniqueness and regularity of solution results for a class of backward stochastic partial differential equations with singular terminal condition. The equation de- scribes the value function of non-Markovian stochastic optimal control problem in which the terminal state of the controlled process is pre-specified. The analysis of such control problems is motivated by models of optimal portfolio liquidation.
\end{abstract}


{\bf AMS Subject Classification:} 93E20, 60H15, 91G80

{\bf Keywords:} Stochastic control, backward stochastic partial differential equation, portfolio liquidation, singular terminal value.

\section{Introduction and model formulation}

	We consider a class of non-Markov stochastic optimal control problems with a singular terminal state constraint on the controlled process. In a Markovian framework such constraints lead to nonlinear partial differential equations (PDEs) with a singularity at the terminal time. Existence and uniqueness of smooth solutions to such PDEs has recently been established in~\cite{GraeweHorstSere13}. This paper extends their results beyond the Markovian framework. We show that  the value function of the corresponding non-Markovian control problem can be characterized by a backward stochastic partial differential equation (BSPDE) with a singular terminal value. Our main contribution is to prove  existence and uniqueness of a sufficiently regular solution to this BSPDE from which one can deduce the optimal control in feedback form.

	The analysis of optimal control problems with singular state constraints on the terminal value of the controlled process is motivated by models of optimal portfolio liquidation under price-sensitive market impact. Traditional financial market models assume that price fluctuations follow some exogenous stochastic process and that all trades can be carried out at the prevailing market price. This assumption that all trades can be settled without impact on market dynamics is appropriate for small investors that trade only a negligible proportion of the average daily trading volume. It is not always appropriate for institutional investors that need to close large positions over short time periods.

	Models of optimal portfolio liquidation have received considerable attentions in the mathematical finance and stochastic control literature in recent years, see, e.g., \cite{AlmgrenChriss00,AnkirchnerKruse12,Forsyth2012,GatheralSchied11,HorstNaujokat13,Kratz13,KratzSchoeneborn13,Schied13}. The literature on optimal liquidation has so far been confined to Markovian models, where the cost functions are either deterministic or driven by stochastic factors that follow a Markovian dynamics. In real world markets, the cost of trading is often of a non-Markovian nature, though. For instance, trading costs are computed based on \textit{volume weighted average prices} (VWAP), a weighted average of past prices and volumes traded at that prices. This calls for a general mathematical framework which allows for non-Markovian factor dynamics and explicit functional dependencies of the optimal liquidation strategies on the observable factor process. 

	This paper provides such a framework.  Our primary focus is on BSPDEs with singular terminal values arising in models of optimal portfolio liquidation. Our model is flexible enough to allow for a non-Markovian factor dynamics and cost functional and for simultaneous submission of active orders (for immediate execution) to a primary market and passive block trades (for possible future execution) to a crossing network or dark pool. Dark pools are alternative trading venues that allow investors shield their orders from public view and hence to reduce market impact and trading costs. Since orders submitted to a dark pool are not openly displayed, order execution is uncertain and often modeled by a point process. To the best of our knowledge \cite{HorstNaujokat13, KratzSchoeneborn13} were the first to study portfolio liquidation problems with dark pools in continuous time.

\subsection{Model and problem formulation}

	Throughout this paper, we work on a probability space $(\Omega,\bar{\sF},\bP)$ equipped with a filtration $\{\bar{\sF}_t\}_{0 \leq t \leq T}$ that satisfies the usual conditions of completeness and right-continuity. The probability space carries two independent $m$-dimensional\footnote{The Brownian motions may well have different dimensions; this assumption is made for convenience only.} Brownian motions $W$ and $B$ as well as an independent point process $\tilde{J}$ on on a non-empty Borel set $\mathcal{Z}\subset\bR^l$ with finite characteristic measure $\mu(dz)$. We endow the set $\mathcal{Z}$ with its Borel $\sigma$-algebra $\mathscr{Z}$ and denote by $\pi(dz,dt)$ the associated Poisson random measure. The filtration generated by $W$, together with all $\bP$ null sets, is denoted by $\{\sF_t\}_{t\geq 0}$. The $\sigma$-algebra of the predictable sets on $\Omega\times[0,+\infty)$ associated with $\{\sF_t\}_{t\geq0}$ is denoted by $\sP$.

	In this work, we address the following stochastic optimal control problem with a terminal state constraint:
\begin{equation} \label{min-contrl-probm}
  \min_{\xi,\rho} E\int_0^T\left\{\eta_s(y_s)|\xi_s|^2+\lambda_s(y_s)|x_s|^2 + \int_{\mathcal {Z}} \gamma_s(y_s,z)|\rho_s(z)|^2\,\mu(dz)\right\}ds
\end{equation}
subject to
\begin{equation} \label{state-proces-contrl}
	\left\{\begin{aligned}
		&x_t=x-\int_0^t\xi_s\,ds-\int_0^t\int_{\mathcal {\mathcal {Z}}}\rho_s(z)\,\pi(dz,ds), \quad t\in[0,T];\\
		& x_T=0;\\
		& y_t=y+\int_0^tb_s(y_s, \omega)\,ds+\int_0^t\bar{\sigma}_s(y_s, \omega)\,dB_s +\int_0^t\sigma_s(y_s, \omega)\,dW_s.
	\end{aligned}\right.
\end{equation}
Here, the real-valued process $(x_t)_{t\in[0,T]}$ is the \textit{state process}; in a portfolio liquidation framework $x_t$ describes the number of shares held at time $t \in [0,T]$. The state process is governed by a pair of \textit{controls} $(\xi,\rho)$ describing, for instance, the rates at which the portfolio is liquidated in the primary market and the block trades placed in the dark pool, respectively, with the Poisson random measure $\pi$ governing dark pool executions. 

The $d$-dimensional process $(y_t)_{t\in[0,T]}$ is an uncontrolled \textit{factor process}. The factor process is driven by the Wiener processes $W$ and $B$; the coefficients $b_t(y;\omega), \bar \sigma_t(y;\omega)$ and $\sigma_t(y;\omega)$ are $\sF$-adapted. We sometimes write $x^{s,x,\xi,\rho}_t$ for $0\leq s\leq t\leq T$ to indicate the dependence of the state process on the control $(\xi,\rho)$, the initial time $s \in [0,T]$ and initial state $x\in \mathbb{R}$. Likewise, we sometimes write $y^{s,y}_t$. The set of \textit{admissible controls} consists of all pairs $(\xi,\rho)\in\cL^2_{\bar{\sF}}(0,T; \bR)\times \cL^{2}_{\bar{\sF}}(0,T;L^2(\mathcal{Z}))$ that satisfy almost surely the \textit{terminal state constraint}
\begin{equation} \label{terminal-condition}
	x_{T}=0.
\end{equation}

	We assume that the cost associated with an admissible control $(\xi,\rho)$ at time~$t \in[0,T)$ and state $(x,y)\in\mathbb R\times \mathbb R^d$ is given by
\begin{equation*} 
	J_t(x,y;\xi,\rho):=E^{\bar \sF_t} \int_t^T\left\{\eta_s(y_s^{t,y})|\xi_s|^2+\lambda_s(y_s^{t,y})|x_s^{t,x;\xi,\rho}|^2  +\int_{\mathcal {Z}}\gamma_s(y_s^{t,y},z)|\rho_s(z)|^2\,\mu(dz)\right\}ds
\end{equation*}
for $\sF$-adapted coefficients $\eta_t(y;\omega), \lambda_t(y;\omega)$ and $\gamma_t(y;\omega)$. The \textit{value function} is denoted by
\begin{equation} \label{value-func}
  V_t(x,y):=\essinf_{(\xi,\rho) \text{ admissible}} J_t(x,y;\xi,\rho)
\end{equation}
In a portfolio liquidation framework the coefficients $\eta_t(y;\omega)$ and $\lambda_t(y;\omega)$ measure the market impact costs and the investor's desire for early liquidation (``risk aversion''), respectively. The term $\gamma_t(y;\omega)$ measures the so-called \textit{slippage} or \textit{adverse selection costs} associated with the execution of dark pool orders.\footnote{The notion of ``slippage costs'' refers to the costs associated with an adversely executed order, e.g., a buy order execution in a dark pool immediately before a price decrease in the primary market.} $V_t(x,y)$ is the cost of liquidating the portfolio comprising $x$ shares during the time interval $[t,T]$, given the current value $y$ of the factor process and (\ref{terminal-condition}) reflects the fact that full liquidation is required by the terminal time.

\subsection{The BSPDE for the value function}

	The special case where $\eta, \lambda$ and~$\gamma$ are independent of $y$ has recently been analyzed by Ankirchner~et~al.~\cite{AnkirchnerJeanblancKruse13}. In this case, the value function can be described by a backward stochastic differential equation~(BSDE) with singular terminal value. To the best of our knowledge, such equations were first analyzed by Popier~\cite{Popier06}. A class of stochastic optimal control problems with the terminal states being constrained to a convex set were studied by Ji and Zhou~\cite{JiZhou-2008} using forward-backward stochastic differential systems. They assumed a strict invertibility of the diffusion term with respect to the control and applied a maximum principle of Pontryagin type. We solve the control problem by solving the corresponding stochastic Hamilton-Jacobi-Bellman (HJB) equation introduced by Peng~\cite{Peng_92} for non-Markovian control problems.

	In view of the linear-quadratic structure of the cost functional a standard arguments suggest a multiplicative decomposition of the value function of the form
\begin{equation} \label{ansatz}
  V_t(x,y)=u_t(y)x^2 \quad \textrm{and}\quad \Psi_t(x,y)=\psi_t(y)x^2
\end{equation}
for a pair of adapted processes $(u,\psi)$ that satisfies the BSPDE (in a suitable class of stochastic processes)
\begin{equation} \label{BSPDE-singlr}
	\left\{\begin{aligned}
  	-du_t(y) & =\left\{\mathcal L u_t(y)+\mathcal M\psi_t(y)+F(s,y,u_t(y)) \right\} dt-\psi_t(y)\, dW_{t}, \quad (t,y)\in [0,T]\times \bR^d;\\
    u_T(y) & = +\infty,  \quad y\in\bR^d,
	\end{aligned}\right.
\end{equation}
where, for $a:=\frac{1}{2}(\sigma\sigma^{\mathcal {T}}+\bar{\sigma}\bar{\sigma}^{\mathcal{T}})$, the operators $\cL$ and $\cM$ act on twice, respectively once continuously differentiable functions according to
\begin{equation*}
	\cL u_t(y) = \tr\left( (a_t(y) D^2 u_t(y) \right) + b^{\mathcal{T}}_t(y)D u_t(y) \quad \text{and} \quad
	\cM \psi_t(y) = \tr\left(  D \psi_t(y)\sigma^{\mathcal{T}}_t(y) \right)
\end{equation*}
with $D$ and $D^2$ being the gradient operator and Hessian matrix respectively throughout this work, and the non-linearity $F: \bR_+\times \bR^d\times L^{0}(\bR^d) \to \bR$ is given by
\begin{equation*}
  F(t,y,\phi(y)) := \lambda_t(y)-\int_{\mathcal{Z}}\frac{|\phi(y)|^2}{\gamma_t(y,z)+\phi(y)}\,\mu(dz) -\frac{|\phi(y)|^2}{\eta_t(y)}.
\end{equation*}

	The preceding BSPDE depends quadratically on $u_t(y)$. Although BSPDEs have been extensively studied in the applied probability and financial mathematics literature, see, e.g., \cite{Bensoussan-fbsd-1983,ChangPangYong-2009,Du_Zhang_DegSemilin2012,EnglezosKaratzas09,Mania-Tevzaze-2003,TangZhang-2009}, no general theory for BSPDEs which are of quadratic growth in $u$ is yet available, not even for finite terminal values. 

	Using recent existence of solutions results for nonlinear BSPDEs \cite{Qiu-2012,QiuTangMPBSPDE11,QiuWei-RBSPDE-2013,Tang-Yang-2011} and the \textrm{I}t$\hat{\textrm{o}}$-\textrm{W}entzell formula for distribution-valued processes \cite{Krylov_09,Tang-Yang-2011} we first prove that the BSPDE resulting from a corresponding control problem with finite terminal condition has a sufficiently smooth solution. Subsequently, we establish a comparison principle from which we deduce that the solution to the BSPDE with infinite terminal value can be obtained as the limit of an increasing sequence of solutions to BSPDEs with finite terminal conditions. We also obtain an explicit asymptotic property of the solution $u$ near the terminal time.

	When all the coefficients are deterministic functions of the state and control variables, then we are in the Markovian setting. In this case our BSPDE simplifies to a parabolic PDE (to be understood in the distributional sense). As a byproduct of our general existence and uniqueness result, corresponding results are obtained under weak assumptions on the model parameters in the Markovian framework.

	The remainder of this paper is organized as follows. Our main assumptions and results are summarized in Section~2. Section~3 is devoted to the proof of the verification theorem while Section~4 establishes the existence of the solution for our singular BSPDE that satisfies the assumptions of the verification theorem. In Section~5 we prove that the BSPDE~\eqref{BSPDE-singlr} actually has a unique non-negative solution in a larger class of stochastic processes that automatically satisfies the asymptotic behavior around the terminal time that is needed for the proof of the verification theorem. The appendix recalls three results on BSPDEs which are used throughout this work.

\section{The main results}

	In order to state our main result we need to introduce some function spaces. For a Banach space $V$ we denote by $\cS^p_{\sF} ([0,T];V)$, $p\in[1,\infty)$, the set of all the $V$-valued and $\sP$-measurable c\`adl\`ag processes $(X_{t})_{t\in [0,T]}$ such that
\[
	\|X\|_{\cS _{\sF}^p([0,T];V)}^p= E \sup_{t\in [0,T]} \|X_t\|_V^p< \infty.
\]
By $\cL^p_{\sF}(0,T;V)$ we denote the class of $V$-valued $\sP$-measurable processes $(u_t)_{t\in[0,T]}$ such that
\begin{align*}
	\|u\|^p_{\cL^p_{\sF}(0,T;V)} &=E \int_0^T\|u_t\|^p_{V}\,dt<\infty, \quad p\in [1,\infty);\\
 \|u\|_{\cL^{\infty}_{\sF}(0,T;V)}&= \esssup_{(\omega,t)\in\Omega\times[0,T]}\|u_t\|_{V}<\infty, \quad p=\infty.
\end{align*} 
In a similarly way we define $\cS _{\bar{\sF}}^p([0,T];V)$ and $\cL^p_{\bar{\sF}}(0,T;V)$. For $u\in\cL^p_{\sF}(0,T;L^p(\mathbb R^d))$, $p\in[1,\infty)$, we write $u\in\cL^{p,\infty}_{\sF}(0,T)$ if
\begin{enumerate}[(i)]
	\item $u$ is continuous on $[0,T]$, $\mathbb{P}\otimes dx$-a.e.;
	\item $\displaystyle\|u\|_{\cL^{p,\infty}_{\sF}(0,T)}^p= E\int_{\bR^{d}}\sup_{t\in[0,T]}|u(t,x)|^{p}\,dx<\infty.$
\end{enumerate}
As usual, the Sobolev space of all functions whose first $k$ derivatives belong to $L^p(\Pi)$ for some domain $\Pi\subset\bR^d$  is denoted by $H^{k,p}(\Pi)$. For simplicity, by saying a finite dimensional space-valued function $u=(u_1,\ldots,u_l)\in H^{k,p}(\Pi)$, $l\in\mathbb{N}$, we mean $u_1,\ldots,u_l\in H^{k,p}(\Pi)$ and $\|u\|_{H^{k,p}(\Pi)}^p:=\sum_{j=1}^l\|u_j\|_{H^{k,p}(\Pi)}^p$.

	Throughout this work, we use $\langle\cdot,\,\cdot\rangle$ to denote the inner product in the usual Hilbert space $L^2(\bR^d)=H^{0,2}(\bR^d)$. For $k\in\bN_0$, we set 
\[
	\mathcal{H}^k=\cS^2_{\sF}([0,T];H^{k,2}(\bR^d))\cap \cL^2_{\sF}(0,T;H^{k+1,2}(\bR^d)) 
\]	
equipped with the norm
\begin{equation*}
	\|u\|_{\mathcal{H}^k}^2= \|u\|^2_{\cS^2_{\sF}([0,T];H^{k,2}(\bR^d))}+\|u\|^2_{\cL^2_{\sF}(0,T;H^{k+1,2}(\bR^d))}.
\end{equation*}

	Our goal is to prove existence of a sufficiently regular solution to the BSPDE (\ref{BSPDE-singlr}) and to characterize the value function of our control problem in terms of that solution. To this end, we first define what we mean by a solution to (\ref{BSPDE-singlr}). 

\begin{definition} \label{defn-solution}
	A pair of processes $(u,\psi) $ is a solution to the BSPDE (\ref{BSPDE-singlr}) if for all $0 \leq t < \tau <T$ it holds $(u,\psi)1_{[0,\tau]\times\mathcal O}\in\cL^2_{\sF}(0,\tau;H^{2,2}(\cO))\times\cL^2_{\sF}(0,\tau;H^{1,2}(\cO))$ for all bounded balls $\cO\subset\bR^d$,
\begin{equation*}
	u_t(y) = u_\tau(y) + \int_t^\tau \left\{ \mathcal  L u_s(y)+\mathcal M\psi_s(y)+F(s,y,u_s(y)) \right\} ds -\int_t^\tau\psi_s(y)\, dW_{t}, \quad dy\textrm{-a.e.},
\end{equation*}
and
\begin{equation*}
	\lim_{\tau\uparrow T} u_{\tau}(y)=+\infty, \quad \mathbb{P}\otimes dy\text{-a.e.}
\end{equation*}
\end{definition}

Our results are established under the following standard measurability and regularity conditions on the model parameters:
\begin{enumerate}[$({\mathcal A} 1)$]
	\item  The function
\begin{equation*}
	(b,\sigma,\bar{\sigma},\eta,\lambda): \Omega\times[0,T]\times\bR^d\longrightarrow \bR^d\times \bR^{d\times m}\times \bR^{d\times m}\times \bR_+\times \bR_+
\end{equation*}
is $\sP\times \mathscr{B}(\bR^d)$-measurable and essentially bounded by $\Lambda>0$. Moreover,
$$
	\gamma: \Omega\times[0,T]\times\bR^d\times\mathcal{Z}\longrightarrow [0,+\infty],
$$
is $\sP\times \mathscr{B}(\bR^d)\times \mathscr{Z}$-measurable.
	\item There exists a constant $L$ such that for all $y_1,y_2\in \bR^d$ and $(\omega,t)\in \Omega\times [0,T]$,
\begin{equation*}
	|b_t(y_1)-b_t(y_2)|+|\sigma_t(y_1)-\sigma_t(y_2)|+|\bar{\sigma}_t(y_1)-\bar{\sigma}_t(y_2)|\leq L|y_1-y_2|.
\end{equation*}
	\item There exist positive constants $\kappa$ and $\kappa_0$ such that for all $(y,\xi,t)\in\bR^d\times\bR^d\times[0,T]$,
$$
\sum_{i,j=1}^d\sum_{r=1}^m\bar{\sigma}^{ir}_t(y)\bar{\sigma}^{jr}_t(y)\xi^i\xi^j \geq \kappa |\xi|^2\quad
\textrm{and}\quad
\eta_t(y)\geq \kappa_0,\quad \mathbb P\text{-a.e.}
$$
\end{enumerate}


The verification theorem requires an integral representation of the process
\begin{equation} \label{def-Phi}
	\left\{ u_t(y_t^{0,y}) | x_t^{0,x,\xi,\rho} |^2 \right\}_{0 \leq t \leq T}.
\end{equation}
We are unaware of a general $L^{\infty}$-theory for BSPDEs; at the same time,  under assumptions $(\mathcal{A}1)-(\mathcal{A}3)$, we can not apply the existing $L^p$-theory $(p\in (1,\infty))$ in our framework directly; see \cite{DuQiuTang10} and references therein. Moreover, as it will turn out, the solution $u$ to (\ref{BSPDE-singlr}) has to be regular enough to allow for an application of the generalized It\^o-Kunita-Wentzell formula of Tang and Yang \cite{Tang-Yang-2011} to the composition~$u_t(y_t)$. To guarantee  regularity and apply the existing $L^p$-theory on BSPDEs, we work with a weighted solution. More precisely, we define, for any integer $q>d$, the function
\[
	\theta: \mathbb{R}^d \to \mathbb{R}, \quad y \mapsto (1+|y|^2)^{-q},
\]
and analyze ${\theta}u$ instead of $u$. A direct computation verifies that $(u,\psi)$ is a solution to \eqref{BSPDE-singlr} if and only if $({\theta}u,{\theta}\psi)$ solves
\begin{equation}\label{BSPDE-singlr-1}
  \left\{\begin{aligned}
  	-dv_t(y) & =\{\tilde{\mathcal L} v_t(y)+\tilde{\mathcal M}\zeta_t(y)+{\theta}F(t,y,(\theta^{-1} v_t)(y))\}\,dt -\zeta_t(y)\, dW_{t}, \quad (t,y)\in [0,T)\times \bR^d;\\
    v_T(y)  &= +\infty,  \quad y\in\bR^d,
	\end{aligned}\right.
\end{equation}
where
\begin{equation*}
	\tilde{ \mathcal L} v_t(y) := \tr(a_t(y)D^2 v_t(y))+ \tilde{b}^{\mathcal{T}}_t(y)D v_t(y)+c_t(y)v_t(y)
\end{equation*}
and
\begin{equation*}
	\tilde{\mathcal M}\zeta_t(y) := \tr(D \zeta_t(y)\sigma^{\mathcal{T}}_t(y))+\beta_t^{\mathcal{T}}(y)\zeta_t(y)
\end{equation*}
and the functions $\tilde{b}_t=(\tilde{b}^i_t)_{i=1}^d$, $\beta_t = (\beta^r_t)_{r=1}^m$ and $c_t$ are given by
\begin{align*}
	\tilde{b}_t^i(y) :&=b^i_t(y) +\frac{4q}{1+|y|^2}\sum_{j=1}^d a^{ij}_t(y)y^j,\\
  \beta^r_t(y) &:= \frac{2q}{1+|y|^2}\sum_{j=1}^d\sigma_t^{jr}(y)y^j, \\
	c_t(y) &:=\frac{2q}{1+|y|^2}\bigg(\tr(a_t(y)) +\sum_{i=1}^dy^ib^i_t(y) +\frac{2(q-1)}{1+|y|^2} \sum_{i,j=1}^da_t^{ij}(y)y^iy^j\bigg).
\end{align*}

For each $\delta\in (0,1)$, let $C^{\delta}(\bR^d)$ be the usual H\"{o}lder space on $\bR^d$. We are now ready to summarize the main results of this paper. 

\begin{theorem} \label{thm-main}
Under assumptions $(\mathcal{A}1)-(\mathcal{A}3)$ the following holds:
\begin{enumerate}[(i)]
	\item The  BSPDE \eqref{BSPDE-singlr} admits a solution $(u,\psi)$ which satisfies  
\begin{equation} \label{solution-1}
	({\theta}u,{\theta}\psi)1_{[0,\tau]}\in \mathcal{H}^1\times  \cL^2_{\sF}(0,T;H^{1,2}(\bR^d)), \quad \tau\in[0,T), 
\end{equation}
and
\begin{align*}
    \frac{c_0}{T-t}\leq u_t(y) \leq \frac{c_1}{T-t},\quad \mathbb{P}\otimes dt \otimes dy\mbox{-a.e.}, 
\end{align*}
  with $c_0$ and $c_1$ being two  positive constants. The function 
\begin{align} \label{VF}
    V(t,y,x):=u_t(y)x^2,\quad (t,x,y)\in[0,T]\times \bR\times \bR^d,
\end{align}
coincides with the value function for almost every $y\in\bR^d$ and the optimal (feedback) control is given by
\begin{equation*}
    \left(\xi^{*}_t,\ \rho^*_t(z)\right)=
    \left(\frac{u_t(y_t)x_t}{\eta_t(y_t)},\  \frac{u_t(y_t)x_{t-}}{\gamma_t(z,y_t)+u_t(y_t)}   \right). 
\end{equation*}
 \item The solution $(u,\psi)$ is the unique non-negative solution to \eqref{BSPDE-singlr} in that sense that if $(\bar{u},\bar{\psi})$ is another solution satisfying (\ref{solution-1}) and $\bar{u}\geq 0$, $\mathbb{P}\otimes dt \otimes dx$-a.e., then
  $$
  \bar{u}_t(y)= u_t(y),\quad \mathbb{P}\otimes dt \otimes dy\mbox{-a.e.}
  $$
\item Under the additional assumption that $\sigma$ is spatially invariant, i.e., does not depend on $y$
one has furthermore for any $p\in(2,+\infty)$,
\[
	\theta(\cdot) u_{\cdot}\Big(\cdot+\int_0^{\cdot}\sigma_s\,dW_s\Big) 
	\in\bigcap\limits_{\tau\in(0,T)}\bigcap\limits_{\delta\in(0,1)}\cL^{2,\infty}_{\sF}(0,\tau)\cap   \cS^p_{\sF}([0,\tau];C^{\delta}(\bR^d))
\]
and the function $V(t,y,x)$ in \eqref{VF} coincides with the value function for \textit{every} $y\in\bR^d$. 
\end{enumerate}
\end{theorem}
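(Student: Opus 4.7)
My plan is to obtain the singular solution of \eqref{BSPDE-singlr} as the monotone limit of solutions to truncated problems with finite terminal condition. For each $n\in\bN$, I first solve the BSPDE with terminal datum $u_T^n(y)=n$ in the weighted formulation \eqref{BSPDE-singlr-1} (the weight $\theta$ tames the spatial growth caused by the constant-in-$u$ term $\lambda$ in $F$), combining the $L^p$-theory for nonlinear BSPDEs recalled in the appendix with a short-time Picard iteration that absorbs the quadratic dependence of $F$ on $u$. This yields non-negative solutions $(u^n,\psi^n)$ with $(\theta u^n,\theta\psi^n)\in\mathcal{H}^1\times\cL^2_{\sF}(0,T;H^{1,2}(\bR^d))$; non-negativity is inherited from $F(\cdot,\cdot,0)=\lambda\ge 0$ via a comparison argument.

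\textbf{Two-sided bounds and passage to the limit.} The heart of existence is a comparison principle for \eqref{BSPDE-singlr}, yielding both monotonicity $u^n\le u^{n+1}$ and $n$-independent two-sided bounds. The deterministic function $c_1/(T-t)$ is a supersolution of \eqref{BSPDE-singlr} for $c_1$ depending only on $\Lambda$, so comparison gives $u^n\le c_1/(T-t)$; dually, a subsolution of the form $c_0/(T-t)-\varepsilon_n$, with $c_0>0$ depending on $\kappa_0,\Lambda$ and $\varepsilon_n\downarrow 0$, produces $u^n\ge c_0/(T-t)-\varepsilon_n$. Setting $u:=\lim_n u^n$, standard BSPDE a priori estimates on each subinterval $[0,\tau]$ with $\tau<T$ allow me to extract $\psi$ such that $(u,\psi)$ enjoys the regularity in \eqref{solution-1}, solves \eqref{BSPDE-singlr} in the sense of Definition~\ref{defn-solution}, and inherits the two-sided bound $c_0/(T-t)\le u\le c_1/(T-t)$; the singular behavior at $T$ is then immediate.

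\textbf{Verification.} I identify $V(t,y,x)=u_t(y)x^2$ with the value function by applying the generalized It\^o-Kunita-Wentzell formula of Tang and Yang to $u_t(y_t)|x_t|^2$ on $[0,\tau]$ with $\tau<T$. Substituting the BSPDE for the drift of $u_t(y_t)$ and completing the square in the controls---the nonlinearity $F$ is precisely what makes the cross terms recombine into squared deviations from the candidate feedback---produces
\begin{equation*}
J_0(x,y;\xi,\rho)+E\bigl[u_\tau(y_\tau)|x_\tau|^2\bigr]-u_0(y)x^2 = E\int_0^\tau\Bigl\{\eta_s(\xi_s-\xi^*_s)^2+\int_{\mathcal{Z}}(\gamma_s+u_s)(\rho_s-\rho^*_s)^2\mu(dz)\Bigr\}ds\ge 0.
\end{equation*}
For every admissible $(\xi,\rho)$ with finite cost, a careful boundary analysis (using the upper bound $u_\tau\le c_1/(T-\tau)$ together with $x_T=0$ and the $\cL^2$-regularity of the controls to upgrade pointwise vanishing of $x_\tau$ into the quantitative estimate $E|x_\tau|^2=o(T-\tau)$) shows $E[u_\tau(y_\tau)|x_\tau|^2]\to 0$, so $J_0\ge u_0x^2$. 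For the candidate feedback the quadratic error is zero; admissibility and the rate of decay required to kill the boundary term follow by inserting $u\ge c_0/(T-t)$ in the ODE for $|x^*_t|^2$, which yields $|x^*_t|^2=O((T-t)^{2c_0/\Lambda})$ with $2c_0/\Lambda>1$ after a suitable choice of $c_0$.

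\textbf{Uniqueness, part (iii), and main obstacle.} For uniqueness in the non-negative class, let $\bar u$ satisfy \eqref{solution-1} with $\bar u\ge 0$. A careful application of the comparison principle, combined with truncation at level $n$ and the asymptotic $\bar u_t\to+\infty$ as $t\uparrow T$, shows $\bar u\ge u^n$ on each $[0,T-1/n]$ and hence $\bar u\ge u$. The reverse inequality comes from first deriving an a priori upper bound $\bar u\le c_1/(T-t)$ by comparison with the supersolution, and then re-running the verification argument with $\bar u$ in place of $u$, which produces $\bar u_0(y)x^2\le V(0,y,x)=u_0(y)x^2$. For (iii), the spatial invariance of $\sigma$ permits the change of unknown $\tilde v_t(y):=(\theta u)_t\bigl(y+\int_0^t\sigma_s\,dW_s\bigr)$, which eliminates the $\tilde{\mathcal M}$-type stochastic transport from \eqref{BSPDE-singlr-1}; the resulting backward equation falls under $L^p$-theory for $p>d\vee 2$, giving $\tilde v\in\cL^{2,\infty}_{\sF}(0,\tau)\cap \cS^p_{\sF}([0,\tau];H^{1,p}(\bR^d))$, and the Sobolev embedding $H^{1,p}\hookrightarrow C^\delta(\bR^d)$ for $\delta<1-d/p$ delivers the H\"older regularity; continuity in $y$ then extends the value function identity from a.e.\ $y$ to \emph{every} $y$. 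I expect the main obstacle to be the sharp lower bound $u\ge c_0/(T-t)$ with $c_0$ large enough for $2c_0/\Lambda>1$, since the verification requires the quantitative rate $|x^*_t|^2=o(T-t)$ for the boundary term to vanish; once the two-sided bound has the right constants, everything else is essentially standard.
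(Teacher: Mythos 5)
Your overall architecture---finite terminal data $u^n_T=n$ in the $\theta$-weighted formulation, comparison with explicit super/subsolutions to get the two-sided $c/(T-t)$ bounds, monotone passage to the limit with interior a priori estimates, It\^o--Kunita--Wentzell plus completion of the square for verification, comparison for uniqueness, and the spatial shift by $\int_0^{\cdot}\sigma_s\,dW_s$ followed by $L^p$-theory and Sobolev embedding for part (iii)---coincides with the paper's. The genuine gap sits in the verification step, in how you kill the boundary term $E\big[u_\tau(y_\tau)|x_\tau|^2\big]$ as $\tau\uparrow T$, and it occurs at two places.

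First, for a \emph{general} admissible control your claimed upgrade of the pointwise constraint $x_T=0$ to the quantitative estimate $E|x_\tau|^2=o(T-\tau)$ is false. Writing $x_\tau=\int_\tau^T\xi_s\,ds+\int_{[\tau,T]\times\mathcal{Z}}\rho_s(z)\,\pi(dz,ds)$, the drift part is indeed $O\big((T-\tau)E\int_\tau^T|\xi_s|^2ds\big)$ by Cauchy--Schwarz, but the compensated jump part contributes $E\int_\tau^T\int_{\mathcal{Z}}|\rho_s(z)|^2\mu(dz)ds$, which is only $o(1)$ and not $o(T-\tau)$; multiplied by $u_\tau\le c_1/(T-\tau)$ this need not vanish. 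The paper closes exactly this hole with Lemma~\ref{lem-admissible-control}: every admissible control may be replaced by one of lesser cost whose state process is monotone with $0\le\rho\le x$, after which Gronwall yields $|x_\tau|^2\le C(T-\tau)E^{\bar{\sF}_\tau}\int_\tau^T|\xi_s|^2ds$. Some such reduction is indispensable, not a routine detail. Second, your treatment of the candidate optimum relies on $2c_0/\Lambda>1$ ``after a suitable choice of $c_0$,'' but $c_0$ is not free: it is the constant produced by the subsolution comparison (of order $\kappa_0 e^{-\mu(\mathcal{Z})T}$), while $\Lambda$ is an upper bound for $\eta$, so $2c_0/\Lambda>1$ cannot be arranged in general---you correctly identify this as your main obstacle, and it does not resolve along your route. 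The paper sidesteps it: equality in the completed-square identity on $[t,\tau]$ together with non-negativity of both terms on the right gives $E\int_t^T\{\eta_s|\xi^*_s|^2+\cdots\}\,ds\le u_t(y)x^2<\infty$ by monotone convergence, hence $\xi^*\in\cL^2_{\bar{\sF}}(0,T;\bR)$ \emph{a posteriori}, and then the same monotone-state estimate as above (applied to $x^*$, which is monotone by construction) kills the boundary term. With these two repairs the rest of your argument goes through essentially as in the paper.
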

\vspace{.7em}
\begin{remark}
When all the coefficients $b,\sigma,\bar{\sigma},\lambda,\eta,\gamma$ are deterministic functions, then the optimal control problem is Markovian and the corresponding BSPDE \eqref{BSPDE-singlr} reduces to a deterministic parabolic partial differential equation
\begin{equation}\label{BSPDE-singlr-sec6}
  \left\{\begin{aligned}
  		-\partial_t u &=\mathcal L u+F(t,y,u), \quad (t,y)\in [0,T]\times \bR^d;\\
    u_T(y) &= +\infty,  \quad y\in\bR^d.
  \end{aligned}\right.
\end{equation}
In this case, we may with no loss of generality assume that $\sigma \equiv 0$ so Theorem~\ref{thm-main}~(iii) indicates that~\eqref{BSPDE-singlr-sec6} admits a unique non-negative solution $u$ in the distributional sense that satisfies 
\[
	{\theta}u\in\bigcap\limits_{\tau\in(0,T)}\bigcap\limits_{\delta\in(0,1)}  C([0,\tau];C^{\delta}(\mathbb{R}^d)),
\] and $V(t,y,x)=u_t(y)x^2$ coincides with the continuous value function for every $y\in\mathbb{R}^d$. 
\end{remark}

\section{The verification theorem}

We are now ready to state the verification theorem. Its proof requires some preparation and is carried out below. 

\begin{theorem}\label{thm-verification}
  Let assumptions $(\mathcal{A}1)-(\mathcal{A}3)$ be satisfied and suppose that $(u,\psi)$ is a solution to \eqref{BSPDE-singlr} that satisfies 
\begin{equation} \label{ABC}
  	({\theta}u,{\theta}\psi)1_{[0,t]}\in\mathcal{H}^1 \times \cL^2_{\sF}(0,T;H^{1,2}(\bR^d)),\quad t\in[0,T),
\end{equation}	
and
\begin{equation} \label{eq-thm-u}
    \frac{c_0}{T-t}\leq u_t(y) \leq \frac{c_1}{T-t},\quad \mathbb{P}\otimes dt \otimes dy\text{-a.e.}, 
\end{equation}
with $c_0$ and $c_1$ being two  positive constants. Then, ${\theta}u\in\cap_{\tau\in(0,T)}\cL^{2,\infty}_{\sF}(0,\tau)$ and
\begin{equation*}
    V(t,y,x):=u_t(y)x^2,\quad (t,x,y)\in[0,T]\times \bR\times \bR^d,
\end{equation*}
coincides with the value function of \eqref{value-func} for almost every $y\in\bR^d$. Moreover, the optimal (feedback) control is given by
\begin{equation} \label{eq-thm-control}
  \left(\xi^{*}_t,\, \rho^*_t(z)\right)= \left(\frac{u_t(y_t)x_t}{\eta_t(y_t)},\, \frac{u_t(y_t)x_{t-}}{\gamma_t(z,y_t)+u_t(y_t)}   \right).
\end{equation}
\end{theorem}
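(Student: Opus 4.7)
The plan is to apply the generalized It\^o--Kunita--Wentzell formula of Tang--Yang \cite{Tang-Yang-2011} to the composition $t\mapsto u_t(y_t^{0,y})$, combine it with the It\^o product rule applied to $u_t(y_t)|x_t|^2$, and then exploit the linear-quadratic structure via a completion of squares. The regularity hypothesis \eqref{ABC} transfers, after unweighting by $\theta$, to local spatial regularity of $(u,\psi)$ on every bounded ball $\cO\subset\bR^d$, uniformly in time on each subinterval $[0,\tau]$ with $\tau<T$; this is exactly what is needed to apply the composition formula of \cite{Tang-Yang-2011} on $[0,\tau]$. Substituting the BSPDE~\eqref{BSPDE-singlr}, the terms $\cL u_t(y_t)$ and $\cM \psi_t(y_t)$ cancel against the second-order It\^o correction and the cross-variation of $\psi_t(y_t)$ with the $W$-part of $dy_t$, leaving $du_t(y_t)=-F(t,y_t,u_t(y_t))\,dt+(\text{martingale differential in }W,B)$.

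Applying the product rule to $u_t(y_t)|x_t|^2$ (no cross-variation term appears since $u(y)$ is continuous while $|x|^2$ has no continuous martingale part), substituting the explicit form of $F$, and completing squares separately in $\xi$ and in $\rho(z)$ yield the key identity valid for every $0\le \tau<T$:
\begin{equation*}
  u_0(y)x^2+E\int_0^\tau Q_s\,ds \;=\; E\bigl[u_\tau(y_\tau)|x_\tau|^2\bigr]+E\int_0^\tau\!\Big\{\eta_s|\xi_s|^2+\lambda_s|x_s|^2+\int_{\mathcal Z}\gamma_s|\rho_s(z)|^2\,\mu(dz)\Big\}\,ds,
\end{equation*}
where
\begin{equation*}
  Q_s := \eta_s(y_s)\left(\xi_s-\tfrac{u_s(y_s)x_s}{\eta_s(y_s)}\right)^{2} +\int_{\mathcal Z}\bigl(\gamma_s(y_s,z)+u_s(y_s)\bigr)\left(\rho_s(z)-\tfrac{u_s(y_s)x_{s-}}{\gamma_s(y_s,z)+u_s(y_s)}\right)^{2}\mu(dz)\ge 0.
\end{equation*}
A standard localisation, justified by the $\mathcal H^1$-regularity of $\theta u$ and the $\cL^2_\sF(0,\tau;H^{1,2})$-regularity of $\theta\psi$, shows that the local-martingale contributions are true martingales on $[0,\tau]$ and vanish under expectation.

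Dropping the non-negative $Q$ gives $E\int_0^\tau(\text{cost}_s)\,ds\ge u_0(y)x^2-E[u_\tau(y_\tau)|x_\tau|^2]$, so the lower bound $V(0,y,x)\ge u_0(y)x^2$ follows for a.e.\ $y$ once one argues $\liminf_{\tau\uparrow T}E[u_\tau(y_\tau)|x_\tau|^2]=0$ along any admissible $(\xi,\rho)$. For optimality I would substitute the candidate $(\xi^*,\rho^*)$ from \eqref{eq-thm-control}: then $Q^*\equiv 0$, and the linear SDE
\begin{equation*}
  dx^*_t=-\frac{u_t(y_t)}{\eta_t(y_t)}x^*_t\,dt-\int_{\mathcal Z}\frac{u_t(y_t)}{\gamma_t(y_t,z)+u_t(y_t)}x^*_{t-}\,\pi(dz,dt),\quad x^*_0=x,
\end{equation*}
has a unique strong solution with $|x^*_t|$ monotone non-increasing; combined with $u/\eta\ge c_0/(\Lambda(T-t))$ and the strictly dissipative jump compensator, this yields the a priori estimate $E|x^*_t|^2\le x^2\bigl((T-t)/T\bigr)^{2c_0/\Lambda}$. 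In particular $x^*_T=0$ a.s.\ and $(\xi^*,\rho^*)\in\cL^2_{\bar\sF}(0,T;\bR)\times\cL^2_{\bar\sF}(0,T;L^2(\mathcal Z))$, so the candidate is admissible and attains $u_0(y)x^2$, matching the lower bound.

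The main obstacle will be the vanishing of the boundary term $E[u_\tau(y_\tau)|x_\tau|^2]$ as $\tau\uparrow T$. For the optimal candidate this reduces, via $u\le c_1/(T-t)$, to $(T-\tau)^{2c_0/\Lambda-1}\to 0$, a rate which may need to be sharpened by a refined ODE bound exploiting also the jump compensator of $|x^*|^2$. For a generic admissible control, the identity $x_\tau=-\int_\tau^T\xi_s\,ds-\int_\tau^T\!\int_{\mathcal Z}\rho_s(z)\,\pi(dz,ds)$ handles the continuous part easily via Cauchy--Schwarz, but the jump part lacks an obvious $o(T-\tau)$ bound in $L^2$; I would close the gap either by approximating $(\xi,\rho)$ in $\cL^2$ by controls with $\rho\equiv 0$ on $[T-\varepsilon,T]$ and then letting $\varepsilon\downarrow 0$, or by invoking the monotone approximation $u^n\uparrow u$ by solutions to BSPDEs with finite terminal $u^n_T=n$ (for which the classical verification applies) and passing to the limit.
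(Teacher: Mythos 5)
Your computational skeleton is the same as the paper's: the Tang--Yang It\^o--Kunita--Wentzell formula applied to $\theta u$ (then unweighted), the product rule for $u_t(y_t)|x_t|^2$, and the completion of squares producing the nonnegative remainder $Q_s$ are exactly the steps leading to the paper's key inequality. The explicit solution of the closed-loop SDE, the pathwise bound $|x^*_t|\le|x|\big((T-t)/T\big)^{c_0/\Lambda}$, and the identification of the candidate as optimal once the boundary term is controlled all match.

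The genuine gap is the one you yourself flag and do not close: the vanishing of $E\big[u_\tau(y_\tau)|x_\tau|^2\big]$ as $\tau\uparrow T$ for an \emph{arbitrary} admissible control, where the jump part of $x$ prevents a naive $o(T-\tau)$ bound in $L^2$. Neither of your two proposed remedies is developed, and neither is what the paper does. The paper's missing ingredient is Lemma~\ref{lem-admissible-control}: for every admissible $(\xi,\rho)$ there is an admissible $(\hat\xi,\hat\rho)$ with lesser or equal cost whose state process is monotone (constructed by truncating $\xi$ and $\rho$ so that $0\le\hat\rho_t\le x_t$); monotonicity then lets one bound the Poisson integral by $\int_t^T|x_s|^2\,ds$ and Gronwall yields $|x_\tau|^2\le C(T-\tau)\,E^{\bar\sF_\tau}\!\int_\tau^T|\hat\xi_s|^2\,ds$, which exactly cancels the singularity $u_\tau\le c_1/(T-\tau)$. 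This reduction to monotone state processes is the idea your proposal lacks. It also resolves your secondary worry about the exponent $2c_0/\Lambda-1$ for the candidate control: one does not sharpen the pathwise decay rate; instead one first deduces $\xi^*\in\cL^2_{\bar\sF}(0,T;\bR)$ from the equality version of the energy identity (monotone convergence against the a priori bound $u_t(y)x^2$), and then applies the same monotonicity-plus-Gronwall estimate to $x^*$, which is monotone by construction. Your remedy (b) (approximating by finite terminal values $u^N_T=N$ and using $x_T=0$ to discard the boundary term) is a legitimate alternative in spirit --- it is essentially how the paper proves uniqueness in Section~5 --- but it is not available inside the verification theorem as stated, since that theorem concerns an arbitrary solution satisfying \eqref{ABC} and \eqref{eq-thm-u}, with no monotone approximating sequence given.
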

	
	We first recall the following generalized It\^o-Kunita-Wentzell formula from which we later derive an integral representation for $(\ref{def-Phi})$.

\begin{lemma}[{\cite[Theorem 3.1]{Tang-Yang-2011}}] \label{lem-Yang-Tang-13}
  Let the coefficients $b$, $\sigma$ and $\bar{\sigma}$ satisfy the assumptions $(\mathcal{A}1)-(\mathcal{A}3)$ and let $G\in L^2(\Omega,\sF_T;H^{1,2}(\bR^d))$, $\Phi\in\cL^2_{\sF}(0,T;H^{2,2}(\bR^d))$, $\Upsilon\in\cL^2_{\sF}(0,T;H^{1,2}(\bR^d))$ and $F\in\cL^2_{\sF}(0,T;L^2(\bR^d))$ such that
\[
	\Phi_t(y)=G(y)+\int_t^TF_s(y)\,ds-\int_t^T\Upsilon_s(y)\,dW_s, \quad dy\text{-a.e.},\quad \text{for all } t\in[0,T].
\]
Then, the compositions $\Phi_\cdot(y_{\cdot}^{s,\cdot})$, $G(y_T^{s,\cdot})$, $F_\cdot(y_{\cdot}^{s,\cdot})$ and $\Upsilon_\cdot(y_{\cdot}^{s,\cdot})$ are well-defined under the measure $\mathbb{P}\otimes dt \otimes dy$, and for almost every $y\in\bR^d$ it holds almost surely for all $t\in[s,T]$
\begin{multline*}
	\Phi_t(y_t^{s,y}) = G(y_T^{s,y}) -\int_t^T \Big\{\tr\left( a_r(y_r^{s,y})D^2 \Phi_r(y_r^{s,y}) + D\Upsilon_r(y_r^{s,y})\sigma^{\mathcal{T}}_r(y_r^{s,y})\right)+b^{\mathcal{T}}_r(y_r^{s,y})D \Phi_r(y_r^{s,y})\\-F_r(y_r^{s,y})\Big\}\,dr-\int_t^T\left\{\sigma_r^{\mathcal{T}}(y_r^{s,y})D \Phi_r(y_r^{s,y})+\Upsilon_r(y_r^{s,y})\right\}dW_r-\int_t^T\bar{\sigma}_r^{\mathcal{T}}(y_r^{s,y})D \Phi_r(y_r^{s,y})\,dB_r.
\end{multline*}
\end{lemma}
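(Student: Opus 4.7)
This is a generalized It\^o--Kunita--Wentzell formula for the composition of a backward-in-time random field of Sobolev regularity with the forward SDE flow $y^{s,y}$, extending the classical smooth-coefficient version. My plan is the standard two-step programme: first mollify $(\Phi, \Upsilon, F, G)$ in the spatial variable $y$ to obtain smooth surrogates, apply the classical pointwise It\^o formula to $\Phi^{\varepsilon}_t(y^{s,y}_t)$, and then pass to the limit $\varepsilon\downarrow 0$ exploiting the spatial Sobolev bounds on the data together with the regularity of the stochastic flow $y\mapsto y^{s,y}_\cdot$.

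\textbf{Smooth step.} Let $\rho_\varepsilon$ be a standard mollifier on $\bR^d$ and set $\Phi^{\varepsilon}:=\Phi\ast\rho_\varepsilon$, and analogously $G^{\varepsilon},F^{\varepsilon},\Upsilon^{\varepsilon}$. Since convolution in $y$ commutes with the Lebesgue and It\^o integrals in $t$, the smoothed process $\Phi^{\varepsilon}$ satisfies the same backward equation with mollified data $(G^{\varepsilon},F^{\varepsilon},\Upsilon^{\varepsilon})$ and is $C^\infty$ in $y$. For each fixed $y$ I would then apply the classical It\^o formula to $\Phi^{\varepsilon}_t(y^{s,y}_t)$: the time differential of $\Phi^{\varepsilon}$ contributes $F^{\varepsilon}_r(y^{s,y}_r)\,dr-\Upsilon^{\varepsilon}_r(y^{s,y}_r)\,dW_r$; the first- and second-order spatial derivatives combined with the $W$- and $B$-driven dynamics of $y^{s,y}$ produce the $\tr(aD^2\Phi^{\varepsilon})$, $b^{\mathcal T}D\Phi^{\varepsilon}$, $\sigma^{\mathcal T}D\Phi^{\varepsilon}\,dW$ and $\bar\sigma^{\mathcal T}D\Phi^{\varepsilon}\,dB$ terms; and the cross-variation between $-\Upsilon^{\varepsilon}_r(y^{s,y}_r)\,dW_r$ and $\sigma^{\mathcal T}_r(y^{s,y}_r)\,dW_r$ gives the $\tr(D\Upsilon^{\varepsilon}\sigma^{\mathcal T})$ contribution. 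This reproduces the stated identity with all quantities carrying the $\varepsilon$-superscript.

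\textbf{Passage to the limit.} Under $(\mathcal A2)$ the SDE for $y^{s,y}$ generates a stochastic flow of homeomorphisms whose Jacobian and its inverse have $L^p$-moments bounded uniformly in $(\omega,t,y)$. A change-of-variables argument then shows that composition with $y^{s,\cdot}_t$ is a bounded operator from $L^2(\bR^d)$ into $L^2(\Omega\times[s,T]\times\bR^d)$. The hypotheses give $\Phi^{\varepsilon}\to\Phi$ in $\cL^2_{\sF}(0,T;H^{2,2})$, $\Upsilon^{\varepsilon}\to\Upsilon$ in $\cL^2_{\sF}(0,T;H^{1,2})$, $F^{\varepsilon}\to F$ in $\cL^2_{\sF}(0,T;L^2)$ and $G^{\varepsilon}\to G$ in $L^2(\Omega,\sF_T;H^{1,2})$, so the previous bound transfers these to convergence of each composed integrand in $L^2(\Omega\times[s,T]\times\bR^d)$. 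A stochastic Fubini argument then identifies each $dr$-, $dW$-, and $dB$-integral appearing in the $\varepsilon$-identity as a Bochner/It\^o integral of an $L^2(\bR^d)$-valued process, and passing to the limit yields the desired identity in $L^2(\Omega\times\bR^d)$ at each fixed $t$.

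\textbf{Main obstacle.} The delicate point is upgrading this $t$-wise convergence to the statement that the identity holds almost surely for \emph{all} $t\in[s,T]$ simultaneously, which is what makes the formula usable in a verification argument. The drift terms pose no difficulty, but for the martingale parts one needs a Burkholder--Davis--Gundy estimate in the Bochner space $L^2(\bR^d;\cS^2_{\bar{\sF}}([s,T]))$. It is precisely this step that forces the full hypotheses $\Phi\in \cL^2_{\sF}(0,T;H^{2,2})$ and $\Upsilon\in\cL^2_{\sF}(0,T;H^{1,2})$: one spatial derivative on $\Upsilon$ and two on $\Phi$ are needed to control the suprema in $t$ of the composed stochastic integrals uniformly in $\varepsilon$. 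Once this BDG-in-Bochner bound is in hand, continuity in $t$ of both sides upgrades the $t$-wise identity to the pathwise statement for $dy$-a.e.\ $y$, which is exactly the form required in the sequel.
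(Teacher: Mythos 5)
You should first be aware that the paper does not prove this statement at all: it is quoted verbatim as \cite[Theorem 3.1]{Tang-Yang-2011}, so there is no in-paper proof to compare your argument against. Your mollification--plus--classical-It\^o--Kunita--plus--limit-passage scheme is the standard route to such formulas (it is essentially the strategy of Krylov's It\^o--Wentzell paper and of Yang--Tang themselves), and at the level of detail given it is a plausible reconstruction: the smooth step correctly identifies all the terms, including the cross-variation $\tr(D\Upsilon^\varepsilon\sigma^{\mathcal T})$ between the $\Upsilon^\varepsilon\,dW$ part of the field and the $\sigma\,dW$ part of the flow, and the BDG upgrade from $t$-wise to pathwise identity is the right closing move.

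Two caveats. First, the step you treat as routine --- ``composition with $y^{s,\cdot}_t$ is a bounded operator from $L^2(\bR^d)$ into $L^2(\Omega\times[s,T]\times\bR^d)$'' --- is the genuinely delicate point here, because the integrands $F_r(\cdot),\Upsilon_r(\cdot),D\Phi_r(\cdot)$ are \emph{random} and $\sF_r$-adapted while the flow is driven by both $W$ and $B$; after the pathwise change of variables one must bound $E\bigl[|F_r(z)|^2\,E[J_r(z)\,|\,\sF_r]\bigr]$, and an almost-sure bound on the conditional expectation of the inverse-flow Jacobian is not free. Yang--Tang avoid this by deriving the needed bounds from local estimates for weak solutions of BSPDEs (the route via \cite{QiuTangMPBSPDE11}), and the present authors likewise obtain the quantitative estimates they actually use (Corollary \ref{cor-ito-wentzell}) by recasting $\Phi$ as the solution of an auxiliary linear BSPDE and invoking Proposition \ref{prop-bspde-duqiutang}, not by flow/Jacobian arguments. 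Second, a minor mischaracterization: the hypotheses $\Phi\in\cL^2_{\sF}(0,T;H^{2,2})$ and $\Upsilon\in\cL^2_{\sF}(0,T;H^{1,2})$ are needed already for the terms $\tr(aD^2\Phi)$ and $\tr(D\Upsilon\sigma^{\mathcal T})$ appearing in the formula to be well-defined square-integrable integrands, not only for the BDG control of the suprema.
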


	Using local estimates for the weak solutions to BSPDEs from \cite{QiuTangMPBSPDE11}, Yang and Tang \cite{Tang-Yang-2011} proved that the above compositions 
are well defined. But they did not establish the integrability properties needed for our proof of the verification theorem. The following corollary establishes such properties. The proof is purely technical and postponed to the appendix.

\begin{corollary}	\label{cor-ito-wentzell}
	Under the hypothesis of Lemma~\ref{lem-Yang-Tang-13}, $\Phi_{\cdot}(y_{\cdot}^{s,y})$ is a continuous and uniformly integrable semi-martingale for almost every $y\in\bR^d$ and $\Phi\in\cL^{2,\infty}_{\sF}(0,T)$. Furthermore, there exists a constant $C$ that depends only on $\kappa$, $L$, $\Lambda$ and $T$ such that
\begin{enumerate}[(i)]
	\item $\displaystyle\int_{\bR^d} \left|G(y_T^{s,y})\right|^2dy  \leq C \|G\|^2_{L^2(\Omega,\sF_T;H^{1,2}(\bR^d))};$
	\item $\displaystyle\int_{\bR^d}\bigg(\int_s^T E \left|F_r(y_r^{s,y})\right|dr\bigg)^2dy \leq C \|F\|^2_{\cL^2_{\sF}(s,T;L^2(\bR^d))};$
	\item $\displaystyle\int_{\bR^d}\sup_{r\in[s,T]}E \left|\Phi_r(y_r^{s,y})\right|^2dy\leq C\left(\|G\|^2_{L^2(\Omega,\sF_T;H^{1,2}(\bR^d))}  +\|F\|^2_{\cL^2_{\sF}(s,T;L^2(\bR^d))}\right).$
\end{enumerate}
\end{corollary}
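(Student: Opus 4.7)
The plan is to combine the Itô--Kunita--Wentzell decomposition from Lemma~\ref{lem-Yang-Tang-13} with standard Kunita flow estimates for the SDE driving $y^{s,\cdot}$. Under $(\mathcal{A}1)$--$(\mathcal{A}2)$ the map $y\mapsto y_r^{s,y}$ is almost surely a $C^1$-diffeomorphism of $\bR^d$ whose inverse $\phi_r^{-1}$ satisfies moment bounds $\sup_{z\in\bR^d} E|\det\nabla_z\phi_r^{-1}(z)|^p \leq C_p$ for every $p\geq 1$, with $C_p$ depending only on $\kappa,L,\Lambda,T,p$. Together with the uniform parabolicity $(\mathcal{A}3)$, the transition density $p_{s,r}(y,z)$ admits Aronson-type Gaussian upper bounds, giving in particular the dual-density estimate $q_{s,r}(z):=\int p_{s,r}(y,z)\,dy\leq C$ uniformly in $z$.

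For (i) I would apply the pathwise change of variables $z=y_T^{s,y}$,
\[
    \int_{\bR^d} |G(y_T^{s,y})|^2\,dy = \int_{\bR^d} |G(z)|^2\,|\det\nabla_z\phi_T^{-1}(z)|\,dz,
\]
and then take expectation. For deterministic $G$ the dual-density bound gives immediately $C\|G\|_{L^2}^2$. For an $\sF_T$-measurable $G$, whose dependence on $W$ correlates it with the Jacobian, I would apply Hölder's inequality in $\omega$ at each fixed $z$, trading $L^{2+\varepsilon}$-integrability of $G$ in space (supplied by the Sobolev embedding $H^{1,2}(\bR^d)\hookrightarrow L^{2+\varepsilon}(\bR^d)$, valid in every dimension) against higher $L^q$-moments of the Jacobian from Kunita. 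This Sobolev--Hölder trade is precisely what forces the right-hand side of (i) to be the full $H^{1,2}$-norm rather than just $L^2$.

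For (ii) Cauchy--Schwarz in time followed by Fubini give
\[
    \int_{\bR^d}\!\left(\int_s^T E|F_r(y_r^{s,y})|\,dr\right)^{\!2} dy \leq (T-s)\int_s^T\!\int_{\bR^d} E|F_r(y_r^{s,y})|^2\,dy\,dr,
\]
and the inner spatial integral is controlled by the same change-of-variables argument as in (i); the time integration provides the room required to absorb the Sobolev loss into $\cL^2(s,T;L^2(\bR^d))$ after a Minkowski swap. For (iii), Lemma~\ref{lem-Yang-Tang-13} expands
\[
    \Phi_r(y_r^{s,y}) = G(y_T^{s,y}) - \int_r^T A_u(y_u^{s,y})\,du - \int_r^T B_u(y_u^{s,y})\,dW_u - \int_r^T C_u(y_u^{s,y})\,dB_u,
\]
with integrands $A,B,C$ built from $D^2\Phi,D\Upsilon,D\Phi,\Upsilon,F$ and the coefficients, all in $\cL^2_{\sF}(s,T;L^2(\bR^d))$. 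Squaring, passing $\sup_{r\in[s,T]}$ inside the expectation, applying the Itô isometry to the martingale parts and Cauchy--Schwarz to the drift, and finally integrating in $y$ reduces everything to terms of the form (i) and (ii). The continuous and uniformly integrable semimartingale claim follows from the decomposition together with (iii); $\Phi\in\cL^{2,\infty}_{\sF}(0,T)$ is deduced by a further application of the same change-of-variables combined with the local parabolic regularity of the BSPDE satisfied by $\Phi$.

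The main obstacle will be the Sobolev--Hölder trade in (i): because the inverse-flow Jacobian is genuinely correlated with any $\sF_T$-measurable $G$, neither direct Cauchy--Schwarz in $\omega$ nor the Markov-semigroup duality available for deterministic data suffices. One has to choose the Hölder exponents so that the pointwise-in-$z$ Jacobian moments are finite while the resulting $L^{2+\varepsilon}$-integral of $G$ can be absorbed into $\|G\|^2_{H^{1,2}}$ via Sobolev embedding and Minkowski. Once this step is in place, the remaining computations are routine applications of Itô isometry, Fubini, and the Kunita moment bounds.
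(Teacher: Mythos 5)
Your route is genuinely different from the paper's, and unfortunately it has a gap that I do not see how to close. The paper never touches the flow Jacobian: it observes that the truncated data $(N\wedge|G|,\,N\wedge|F|)$ generate, via Proposition~\ref{prop-bspde-duqiutang}, a linear BSPDE whose solution satisfies $u^N_s(y)=E^{\sF_s}\{N\wedge|G(y_T^{s,y})|+\int_s^T N\wedge|F_r(y_r^{s,y})|\,dr\}$ by Lemma~\ref{lem-Yang-Tang-13}, then invokes the a priori estimate \eqref{est-prop-dqiutang} with $k=1$ (this is exactly where the $H^{1,2}$-norm of $G$ enters) and lets $N\to\infty$ by Fatou and Jensen. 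Estimates (i)--(iii) and the uniform integrability all drop out of this duality argument at once.

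The concrete problem with your approach is the ``Sobolev--H\"older trade'' in (i). After the change of variables you must control $\int_{\bR^d} E\big[|G(z)|^2 J(z)\big]\,dz$ with $J$ the inverse-flow Jacobian, and since $G$ is $\sF_T$-measurable it is correlated with $J$. H\"older's inequality in $\omega$ at fixed $z$ requires $\big(E|G(z)|^{2p}\big)^{1/p}$ for some $p>1$, i.e.\ \emph{higher moments of $G$ in $\omega$}; but the hypothesis only gives $G\in L^2(\Omega;H^{1,2}(\bR^d))$, i.e.\ second moments in $\omega$ with extra \emph{spatial} regularity. Sobolev embedding improves spatial integrability, not $\omega$-integrability, and Minkowski's integral inequality runs in the wrong direction to convert $\int_z\big(E|G(z)|^{2p}\big)^{1/p}dz$ into something controlled by $E\|G\|^2_{H^{1,2}}$. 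Performing the H\"older step in $z$ instead fails too, since $\|J\|_{L^r(\bR^d)}$ is generically infinite on the whole space. In addition, your preliminary claims are not available under $(\mathcal{A}1)$--$(\mathcal{A}3)$: the coefficients are merely bounded and Lipschitz (and random, adapted, driven by both $W$ and $B$), so the flow need not be a $C^1$-diffeomorphism with Jacobian moment bounds, and Aronson-type Gaussian bounds for a deterministic transition density do not apply to this random, non-divergence-form setting. Since (ii) and (iii) in your plan reduce to the same change-of-variables estimate, the whole argument rests on the step that fails; the BSPDE duality of the paper is the device that replaces it.
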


	Our second auxiliary result is the following lemma on the set of admissible controls. It states that we may with no loss of generality assume that the state process associated with an admissible control is monotone. A similar result has been established in \cite{GraeweHorstSere13} for the Markovian case.

\begin{lemma}\label{lem-admissible-control}
  For each admissible control $(\xi,\rho)$ there exists a corresponding admissible control $(\hat{\xi},\hat{\rho})$ with lesser or equal cost such that the process 
  $x^{0,x;\hat{\xi},\hat{\rho}}$ is almost surely monotone. Furthermore, there exists a constant $C < \infty$ which is independent of $t,x,\hat{\rho},\hat{\xi}$ such that
\begin{equation}  \label{est-lem-adm-control}
|x_t^{0,x;\hat{\xi},\hat{\rho}}|^2 \leq C (T-t) E^{\bar\sF_t}\int_t^T|\hat{\xi}_s|^2\,ds, \quad  t\in[0,T].
\end{equation}
\end{lemma}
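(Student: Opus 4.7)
By symmetry I assume $x>0$; the cases $x=0$ and $x<0$ are immediate. The idea is to replace an arbitrary admissible $(\xi,\rho)$ by the controls whose state process is the \emph{monotone envelope}
\[
	\hat x_t := \Bigl(\inf_{s\in[0,t]} x_s^{0,x;\xi,\rho}\Bigr)\vee 0,\qquad t\in[0,T].
\]
This process is c\`adl\`ag, non-increasing, $[0,x]$-valued, and $\hat x_T=0$ since $x_T=0$ forces $\inf_{[0,T]}x\leq 0$. Decomposing the non-decreasing c\`adl\`ag process $x-\hat x$ into its absolutely continuous and pure jump parts gives
\[
	\hat x_t = x - \int_0^t\hat\xi_s\,ds - \int_0^t\!\int_{\mathcal Z}\hat\rho_s(z)\,\pi(dz,ds)
\]
for predictable $\hat\xi\geq 0$ and $\hat\rho\geq 0$; jumps of $\hat x$ can occur only at firings of $\pi$, and the jump magnitude is given by the explicit formula $\hat\rho_s(z) = \bigl(\hat x_{s-} - (x_{s-}-\rho_s(z))^+\bigr)^+$.

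A pointwise case analysis yields $|\hat x_s|\leq|x_s|$, $\hat\xi_s\leq|\xi_s|$ (the continuous rates agree only on $\{\hat x_s=x_s>0\}$ and $\hat\xi$ vanishes elsewhere), and $\hat\rho_s(z)\leq|\rho_s(z)|$. Together with $\eta,\lambda,\gamma\geq 0$, these bounds give $J_t(x,y;\hat\xi,\hat\rho)\leq J_t(x,y;\xi,\rho)$ integrand-wise, and admissibility of $(\hat\xi,\hat\rho)$ in $\cL^2_{\bar\sF}\times\cL^2_{\bar\sF}$ is immediate.

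For the estimate (\ref{est-lem-adm-control}), I would exploit the independence of the future Poisson atoms of $\pi$ on $(t,T]\times\mathcal Z$ from $\bar\sF_t$. On the event $E_t=\{\pi((t,T]\times\mathcal Z)=0\}$, which is independent of $\bar\sF_t$ with probability $e^{-\mu(\mathcal Z)(T-t)}$, the jump integral vanishes, so $\hat x_t=\int_t^T\hat\xi_s\,ds$ pathwise on $E_t$. Cauchy--Schwarz gives $1_{E_t}|\hat x_t|^2\leq 1_{E_t}(T-t)\int_t^T|\hat\xi_s|^2\,ds$ on $\Omega$, and taking $E^{\bar\sF_t}$ while pulling out the $\bar\sF_t$-measurable factor $|\hat x_t|^2$ yields
\[
	e^{-\mu(\mathcal Z)(T-t)}\,|\hat x_t|^2 \leq (T-t)\,E^{\bar\sF_t}\!\int_t^T|\hat\xi_s|^2\,ds,
\]
which is (\ref{est-lem-adm-control}) with $C=e^{\mu(\mathcal Z)T}$, independent of $t$, $x$, $\hat\xi$, and $\hat\rho$.

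The main obstacle, in my view, is the predictable-process bookkeeping needed to realize the decomposition with a genuinely $\sP\times\mathscr Z$-measurable integrand $\hat\rho$; this should follow from the left-continuity of $\hat x_-$ and $x_-$ together with the predictability of $\rho$, but requires care near joint discontinuities.
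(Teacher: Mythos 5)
Your proof is correct, but both of its main steps take a genuinely different route from the paper's. For the monotone replacement, the paper does not take the running infimum of the original state process; it instead solves the modified SDE $\tilde x_t = x-\int_0^t\xi^+_s\,ds-\int_0^t\int_{\mathcal Z}\rho_s^+(z)\wedge\tilde x_s^+\,\pi(dz,ds)$ and sets $\hat\xi=\xi^+1_{\tilde x>0}$, $\hat\rho=\rho^+\wedge\tilde x^+$, which hands it predictable controls for free. Your envelope construction buys the cleaner pointwise comparisons $|\hat x_s|\le|x_s|$, $\hat\xi_s\le|\xi_s|$, $\hat\rho_s(z)\le|\rho_s(z)|$, at the price of the decomposition bookkeeping you flag — which does go through: between atoms of $\pi$ the state path is absolutely continuous, hence so is its running infimum, and your jump formula is a measurable function of $\hat x_{s-}$, $x_{s-}$ and $\rho_s(z)$, hence $\sP\times\mathscr Z$-measurable. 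For the estimate, the paper expresses $x_t^{0,x;\hat\xi,\hat\rho}$ via the compensated measure $\tilde\pi$, uses the conditional It\^o isometry together with $0\le\hat\rho\le x^{0,x;\hat\xi,\hat\rho}$, and closes with Gronwall's inequality; your conditioning on the no-jump event $E_t$ avoids both the compensator and Gronwall and is the more elementary argument, yielding the explicit constant $e^{\mu(\mathcal Z)T}$. It does rely on $\bP(E_t\mid\bar\sF_t)=e^{-\mu(\mathcal Z)(T-t)}$, i.e.\ on $\pi$ having increments over $(t,T]$ independent of $\bar\sF_t$; this is implicit in the paper's standing setup (deterministic compensator $\mu(dz)\,dt$, so that $\tilde\pi$-integrals are $\bar\sF$-martingales), but is worth stating. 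Two cosmetic points: the "non-decreasing process $x-\hat x$" must be read as (initial value) minus $\hat x_t$, since $x_t-\hat x_t$ is not monotone; and on the contact set $\{\hat x_s=x_s>0\}$ the rate of decrease of $\hat x$ is $\xi_s^+$ rather than $|\xi_s|$ — but the inequality $\hat\xi_s\le|\xi_s|$ you actually use is correct.
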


\begin{proof}
  Assume that $x\geq 0$ (the case for $x\leq 0$ follows in a similar way). For the admissible control $(\xi,\rho)$, let $(\tilde{x}_t)\in \cS _{\bar{\sF}}^2([0,T])$ be the unique solution of the following stochastic differential equation
\[
	\tilde{x}_t=x-\int_0^t\xi^+_s\,ds-\int_0^t\int_{\mathcal {\mathcal {Z}}}\rho^+_s(z)\wedge \tilde{x}_s^+\,\pi(dz,ds),
\]
where $f^+:=\max\{f,0\}$ for $f=\tilde{x}_s,\xi_s$ or $\rho_s$. Set
\[
	\hat{\xi}_t :=\xi_t^+ 1_{\tilde{x}_t>0} \quad \textrm{and} \quad\hat{\rho}_t(z) :=\rho^+_t(z)\wedge \tilde{x}_s^+.
\]
It is easy to check that $(\hat{\xi},\hat{\rho})\in \cL^2_{\bar{\sF}}(0,T)\times \cL^{2}_{\bar{\sF}}(0,T;L^2(\mathcal{Z}))$ is an admissible control pair with lesser or equal cost and that $x^{0,x;\hat{\xi},\hat{\rho}}$ is decreasing almost surely. Since $x^{0,x;\hat{\xi},\hat{\rho}}$ is non-negative and decreasing,
\[
	0\leq\hat{\rho}_t\leq x_t^{0,x;\hat{\xi},\hat{\rho}},\quad \mathbb P\otimes dt\otimes \mu(dz)\text{-a.e.} \]
Thus,
\begin{align*}
	|x_t^{0,x;\hat{\xi},\hat{\rho}}|^2 
  &\leq C E^{\bar\sF_t}\bigg\{ \bigg|\int_t^T\hat{\xi}_s\,ds\bigg|^2+\bigg| \int_{[t,T]\times\mathcal{Z}}\hat{\rho}_{s-}(z)\,\tilde{\pi}(dz,ds)\bigg|^2+\bigg|\int_{[t,T]\times\mathcal{Z}}\hat{\rho}_{s-}(z)\,\mu(dz)ds\bigg|^2\bigg\}\\
  &\leq C(T-t)E^{\bar\sF_t}\bigg\{\int_t^T|\hat{\xi}_s|^2\,ds+ \int_t^T|x_s^{0,x;\hat{\xi},\hat{\rho}}|^2\,ds\bigg\},
\end{align*}
which by Gronwall's inequality implies 
\begin{equation*}
	|x_t^{0,x;\hat{\xi},\hat{\rho}}|^2  \leq C (T-t)E^{\bar\sF_t}\int_t^T|\hat{\xi}_s|^2\,ds. \qedhere
\end{equation*}
\end{proof}

We are now ready to give the proof of the verification theorem. 

{\em Proof of Theorem \ref{thm-verification}.}
	By assumption ${\theta}u1_{[0,t]}\in \mathcal{H}^1$ for any $t\in(0,T)$, an application of Proposition \ref{prop-bspde-duqiutang} with $G={\theta}u_\tau$ for any $\tau<T$ yields ${\theta}u\in\cap_{\tau\in(0,T)}\cL^{2,\infty}_{\sF}(0,\tau)$. 
	
The stochastic HJB equation associated with our optimization problem is given by the following BSPDE: 
\begin{equation*}\label{BSPDE-value-func}
	\left\{\begin{aligned}
		-&dV_t(x,y)  = \bigg[\mathcal L V_t(x,y)+\mathcal M \Psi_t(x,y) +\essinf_{\xi,\rho}\bigg\{-\xi D_xV_t(x,y)+\eta_t(y)|\xi|^2 +  \lambda_t(y)|x|^2\\ & + \int_{\mathcal{Z}} \big\{ V_t(x-\rho,y)- V_t(x,y) +\gamma_t(y,z)|\rho|^2\big\}\,\mu(dz) \bigg\}\bigg]\, dt   -\Psi_t(x,y)\, dW_{t}, \quad (t,x,y)\in [0,T)\times\bR\times \bR^d; \\
    &V_T(x,y)= +\infty\cdot1_{x\neq 0},  \quad (x,y)\in\bR\times\bR^d.
	\end{aligned}\right.
\end{equation*}
It is easy to show that the pair $V_t(x,y) := u_t(x)|x|^2$ and $\Psi_t(x,y) := \psi_t(y)|x|^2$ solves the above equation if and only if $(u,\psi)$ solves  (\ref{BSPDE-singlr}). This shows that $(\xi^*,\rho^*)$ is the candidate optimal strategy. It therefore remains to show that $(\xi^*,\rho^*)$ is admissible and attains the minimal cost. 

In order to show admissibility, we plug the explicit expression for $(\xi^*,\rho^*)$ into the state process and get
\begin{equation*} 
    x_{t}^*:=x\prod_{0<s\leq t}\left\{1-\int_{Z}\frac{u_s(y_s^{0,y})}{\gamma_s(y_s^{0,y},z)+u_s(y_s^{0,y})}\,\pi(dz,\{s\})  \right\}
    \exp\left(-\int_0^t\frac{u_s(y_s^{0,y})}{\eta_s(y_s^{0,y})}\,ds\right)
\end{equation*}
for $t\in[0,t)$. Hence,
\begin{align*}
  |x_{t}^*|\leq |x| \exp\left(-\int_0^t\frac{u_s(y_s^{0,y})}{\eta_s(y_s^{0,y})}\,ds\right)\leq|x| \exp\left(-\int_0^t \frac{c_0}{\Lambda(T-s)}\,ds \right)=|x|\left(\frac{T-t}{T}\right)^{c_0/\Lambda}\stackrel{t \uparrow T}{\longrightarrow} 0.
\end{align*}
From the definition of $(\xi^*,\rho^*)$, we immediately infer that $\rho^*\in\cL^{2}_{\bar{\sF}}(0,T;L^2(\mathcal{Z}))$ and $\xi^*\in \cL^2_{\bar{\sF}}(0,t;\bR)$ for any $t\in(0,T)$. 
Moreover, the associated state sequence $x^*$ is monotone.    
	
In order to show that $(\xi^*,\rho^*)$ is admissible and that the cost functional attains its minimum at $(\xi^*,\rho^*)$, we notice that the process ${\theta}(y_t^{0,y}) u_t(y_t^{0,y})$ satisfies the assumptions of Lemma \ref{lem-Yang-Tang-13} so we can apply the generalized It\^o-Kunita-Wentzell formula. A subsequent application of the standard It\^o formula to the product of $\theta^{-1}$ and $\theta u$ yields the stochastic differential equation for $u_t(y_t^{0,y})$. 

Applying the standard It\^o formula again, this time to  $u_t(y_t^{0,y}) |x_t^{0,x;\xi,\rho}|^2$, we finally obtain the SDE for the candidate value function. A tedious but straightforward computation shows that for all admissible strategies $(\xi,\rho)$ it holds for almost every $y\in\bR^d$ that
\begin{multline}\label{xyzz}
	u_t(y)\big|x|^2- E^{\bar{\sF}_t}\left\{ u_\tau(y_\tau^{t,y})\big|x_\tau^{t,x;\xi,\rho}\big|^2\right\} \\
		\begin{aligned}
  			&
			\begin{split}
				=E^{\bar\sF_t} \int_t^\tau \bigg\{ 2u_s(y_s^{t,y})x_s^{t,x;\xi,\rho}\xi_s +u_s(y_s^{t,y})\int_{\mathcal{Z}}\big\{ 2\rho_s(z)x_s^{t,x;\xi,\rho}-|\rho_s(z)|^2\big\}\,\mu(dz)+\lambda_s(y_s^{t,y})|x_s^{t,x;\xi,\rho}|^2 &\\
	 			-\int_{\mathcal{Z}}\frac{|u_s(y_s^{t,y})|^2|x_s^{t,x;\xi,\rho}|^2}{\gamma_s(y_s^{t,y},z) +u_s(y_s^{t,y})}\mu(dz) -\frac{|u_s(y_s^{t,y})|^2 |x_s^{t,x;\xi,\rho}|^2}{\eta_s(y_s^{t,y})}\bigg\}\,ds&
			\end{split}
			\\
		&\leq E^{\bar{\sF}_t} \int_t^\tau \left\{ \eta_s(y_s^{t,y})|\xi_s|^2 +\lambda_s(y_s^{t,y})\big| x_s^{t,x;\xi,\rho}  \big|^2 +\int_{\mathcal{Z}}\gamma_s(y_s^{t,y},z) |\rho_s(z)|^2\,\mu(dz)\right\}ds
	\end{aligned}
\end{multline}
for all $0\leq t\leq \tau <T$. In view of Lemma \ref{lem-admissible-control} we may with no loss of generality assume that process $x^{0,x;\xi,\rho}$ is monotone and hence,
\begin{align*} 
	\lim_{\tau\rightarrow T} E^{\bar\sF_t}\left\{ u_\tau(y_\tau^{t,y})\left|x_\tau^{t,x;\xi,\rho}\right|^2\right\} \leq\lim_{\tau\rightarrow T} \frac{c_1}{T-\tau} C (T-\tau)E^{\bar\sF_t}\int_\tau^T\left|\xi_s\right|^2ds = 0.
\end{align*}
Thus, taking the limit $\tau\rightarrow T$ in \eqref{xyzz} yields that $J_t(x,y;\xi,\rho)\leq u_t(y)x^2$ for any admissible control $(\xi,\rho)$. For $(\xi^*,\rho^*)$ we have equality in \eqref{xyzz}, which implies $u_t(y)x^2=J_t(x,y;\xi^*,\rho^*)$. But this in particular shows $\xi^*\in L^2_{\bar\sF}(0,T;\mathbb R)$, thus $(\xi^*,\rho^*)$ is admissible, attains the minimal cost, and hence is optimal. \hfill \qed


\section{Existence of a solution to BSPDE \eqref{BSPDE-singlr}}

	As a result of the verification theorem there exists at most one solution $(u,\psi)$ to \eqref{BSPDE-singlr} that satisfies \eqref{ABC} and \eqref{eq-thm-u}. In this section, we prove existence of a solution with these properties. To this end, we set
\begin{equation} \label{hatF}
	\hat{F}(t,y,\phi(y)) := F(t,y,|\phi(y)|),\quad (t,y,\phi)\in \bR_+\times \bR^d\times L^{0}(\bR^d),
\end{equation}
and construct the solution as the limit of a sequence of such a solution to a family of BSPDEs with driver $\hat{F}$ and finite increasing terminal values. More precisely, for each $N\in\bN$, we consider the BSPDE
\begin{equation} \label{BSPDE-singlr-N}
  \left\{\begin{aligned}
  	-dv^N_t(y)& = \{\tilde{ \mathcal L} v^N_t(y) + \tilde{ \mathcal M} \zeta^N_t(y) +\theta(y) \hat{F}(t,y,(\theta^{-1} v^N_t)(y))\}\,dt-\zeta^N_t(y)\, dW_{t}, \quad (t,y)\in [0,T]\times \bR^d;\\
   	v^N_T(y) &=  N{\theta}(y),  \quad y\in\bR,
  \end{aligned}\right.
\end{equation}
that corresponds to the singular BSPDE \eqref{BSPDE-singlr-1}, with the pair $(F,\infty)$ being replaced by $(\hat{F},N {\theta})$.  We cannot appeal directly to Proposition \ref{prop-bspde-duqiutang} to prove existence of a solution to the preceding BSPDE, due to the quadratic dependence of the driver $\hat{F}$ on $|\phi(y)|$ in \eqref{hatF}. However, we expect $v^N$ to be finite and hence to be able to construct a solution by a standard truncation argument.

\begin{proposition} \label{prop-N}
  Let assumptions $(\mathcal{A}1)-(\mathcal{A}3)$ be satisfied.  For each $N\in\bN$, there exists a unique solution to (\ref{BSPDE-singlr-N}) such that 
\begin{equation*}
	(v^N,\zeta^N)\in (\mathcal{H}^1\cap \cL^{2,\infty}_{\sF}(0,T))\times \cL^2_{\sF}(0,T;H^{1,2}(\bR^d))
\end{equation*}
and $\theta^{-1} v^N\in \cL^{\infty}_{\sF}(0,T;L^{\infty}(\bR^d))$.

\end{proposition}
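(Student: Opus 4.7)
The plan is to construct $v^N$ by truncating the quadratically growing nonlinearity to a Lipschitz one, solving the resulting BSPDE via the linear $L^p$-theory of the appendix, and then verifying a posteriori that the truncation is inactive. For $K>0$, set $\varphi_K(r):=(-K)\vee r\wedge K$ and define
\[
  \hat F^K(t,y,u):=\hat F\bigl(t,y,\varphi_K(u)\bigr),\qquad
  \hat F^K_\theta(t,y,v):=\theta(y)\,\hat F^K\bigl(t,y,\theta^{-1}(y)v\bigr).
\]
Then $\hat F^K_\theta$ is uniformly bounded (by a constant depending on $K$, $\Lambda$, $\kappa_0$ and $\mu(\mathcal{Z})$) and Lipschitz in $v$ with a $K$-dependent constant $C_K$, the prefactor $\theta$ cancelling the $\theta^{-1}$ inside thanks to the $1$-Lipschitz property of $\varphi_K$. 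Proposition~\ref{prop-bspde-duqiutang} (via a contraction-mapping iteration if stated only for linear equations) then yields a unique solution $(v^{N,K},\zeta^{N,K})$ in $(\mathcal H^1\cap\cL^{2,\infty}_{\sF}(0,T))\times\cL^2_{\sF}(0,T;H^{1,2}(\bR^d))$ with terminal value $N\theta$.

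To remove the truncation I establish uniform a priori bounds $0\le u^{N,K}\le N+\Lambda T$ for $u^{N,K}:=\theta^{-1}v^{N,K}$, which solves the unweighted truncated BSPDE with driver $\hat F^K$ and terminal value $N$. The constant $\underline u\equiv 0$ is a sub-solution since $\hat F^K(t,y,0)=\lambda_t(y)\ge 0$, and $\bar u_t:=N+\Lambda(T-t)$ is a super-solution since $-d\bar u_t=\Lambda\,dt$ while $\hat F^K(t,y,\bar u_t)\le\lambda_t(y)\le\Lambda$ (the two non-positive terms in $\hat F$ are discarded). Writing $w:=u^{N,K}-\bar u$ gives a linear BSPDE
\[
  -dw_t=\{\mathcal L w_t+\mathcal M\psi^{N,K}_t+\alpha_t w_t+g_t\}\,dt-\psi^{N,K}_t\,dW_t,\quad w_T=0,
\]
with $|\alpha_t|\le C_K$ (a difference quotient of $\hat F^K$) and $g_t=\hat F^K(t,y,\bar u_t)-\Lambda\le 0$; an $L^2$-energy estimate for $\|w^+\|_{L^2(\bR^d)}^2$ of the type underlying the $L^p$-theory forces $w\le 0$. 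The reverse inequality follows analogously.

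Taking $K_0:=N+\Lambda T$, the bound $|u^{N,K_0}|\le K_0$ makes $\varphi_{K_0}$ the identity on the range of $u^{N,K_0}$, so $\hat F^{K_0}(t,y,u^{N,K_0})=\hat F(t,y,u^{N,K_0})$ and $(v^N,\zeta^N):=(v^{N,K_0},\zeta^{N,K_0})$ solves \eqref{BSPDE-singlr-N}; the same bound delivers $\theta^{-1}v^N\in\cL^\infty_{\sF}(0,T;L^\infty(\bR^d))$. Uniqueness in the stated class is immediate: any two such solutions are weighted-bounded by some common $K^*$ and hence coincide with the unique solution of the $K^*$-truncated equation. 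The principal obstacle I anticipate is the comparison step itself, since $u^{N,K}$ only has Sobolev regularity and classical pointwise maximum principle arguments are unavailable; the clean route is an It\^o formula for $\|(u^{N,K}-\bar u)^+\|_{L^2(\bR^d)}^2$, exploiting that $\bar u$ is spatially constant (so $\mathcal L\bar u=0$ and $\mathcal M 0=0$) and that $\hat F$ is non-increasing in $u$ on $[0,\infty)$ to close a Gronwall estimate.
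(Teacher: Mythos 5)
Your proposal is correct and follows essentially the same route as the paper's own proof: truncate the quadratically growing term to obtain a Lipschitz driver, solve the truncated equation via Proposition~\ref{prop-bspde-duqiutang}, compare with the explicit super-solution $N+\Lambda(T-t)$ (and sub-solution $0$) to conclude the truncation is inactive once the cap exceeds $N+\Lambda T$, and read off uniqueness from that of the truncated equation (the paper truncates only one factor of the quadratic term, $M\wedge|\theta^{-1}v|$, rather than the whole argument, which is immaterial). The one point to fix is that your comparison step should be run on the weighted difference $v^{N,K}-\theta\bar u$ (as the paper does via Corollary~\ref{cor-comprn-frm-DuQIUTang}) rather than on $u^{N,K}-\bar u$, since $u^{N,K}=\theta^{-1}v^{N,K}$ and the constant $\bar u$ need not lie in $L^2(\bR^d)$, so the energy functional $\|(u^{N,K}-\bar u)^+\|_{L^2(\bR^d)}^2$ is not a priori finite.
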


\begin{proof}
	For each $M\in\bN$ there exists a unique solution
\[
	(v^{N,M},\zeta^{N,M})\in (\mathcal{H}^1 \cap \cL^{2,\infty}_{\sF}(0,T))  \times \cL^2_{\sF}(0,T;H^{1,2}(\bR^d))
\]
to the BSPDE
\begin{equation} \label{BSPDE-singlr-N-M}
  \left\{\begin{aligned}
  	-dv_t^{N,M}(y) &= \begin{aligned}[t]&\bigg(\tilde{ \mathcal L} v^{N,M}_t + \tilde{ \mathcal M} \zeta_t^{N,M}+ \theta\lambda -\int_{\mathcal{Z}}\frac{ \theta^{-1} |{v}^{N,M}_t|^2}{\gamma_t(\cdot,z)+|\theta^{-1} {v}^{N,M}_t|}\,\mu(dz) \\&-\frac{(M\wedge|\theta^{-1}{v}^{N,M}_t|) |{v}^{N,M}_t|}{\eta_t}\bigg)(y)\,dt-{\zeta}^{N,M}(y)\, dW_{t}, \quad  (t,y)\in [0,T]\times \bR^d;\end{aligned}\\
    v^{N,M}_T(y)&= N\theta(y),  \quad y\in\bR^d,
    \end{aligned}\right.
\end{equation}
due to Proposition \ref{prop-bspde-duqiutang}. Putting
$$
\hat{v}_t(y)={\theta}(y)\left(N+\Lambda(T-t)\right),
$$
we verify that $(\hat{v},0)$ is a solution of the above BSPDE with $(\lambda,\gamma,M)$ being replaced by $(\Lambda,+\infty,0)$. The comparison principle stated in Corollary~\ref{cor-comprn-frm-DuQIUTang} yields 
$$
0\leq v^{N,M}_t(y)\leq \hat{v}_t(y),\quad \mathbb P\otimes dt \otimes dy\textrm{-a.e.},
$$
which implies for any $M\in\bN$ that
$$
0\leq \theta^{-1}(y) v^{N,M}_t(y)\leq N+\Lambda T,\quad \mathbb P\otimes dt \otimes dy\textrm{-a.e.}
$$
Hence, if $M>N+\Lambda T$, then $(v^{N,M},\zeta^{N,M})$ does not depend on $M$ and is in fact a solution to \eqref{BSPDE-singlr-N}. This also yields uniqueness of solutions as \eqref{BSPDE-singlr-N-M} admits a unique solution for each $M \in \mathbb{N}$. 
\end{proof}

	The proof of Proposition \ref{prop-N} shows that the solution $(v^N,\zeta^N)$ to \eqref{BSPDE-singlr-N} coincides with that of \eqref{BSPDE-singlr-N-M} for some $M\in\bN$. Hence, as an immediate consequence of Corollary~\ref{cor-comprn-frm-DuQIUTang} we obtain the following comparison principle. 

\begin{corollary}\label{cor-prop-N}
   Let assumptions $(\mathcal{A}1)-(\mathcal{A}3)$ be satisfied and let $(\bar{\lambda},\bar{\gamma},\bar{\eta})$ satisfy the same conditions as $(\lambda,\gamma,\eta)$. Suppose further that
\[
	(\bar{v},\bar{\zeta})\in \mathcal{H}^1\times \cL^2_{\sF}(0,T;H^{1,2}(\bR^d))
\]
with $\theta^{-1} \bar{v}\in \cL^{\infty}_{\sF}(0,T;L^{\infty}(\bR^d))$, is a solution to the following BSPDE:
\begin{equation}\label{BSPDE-singlr-bar}
  \left\{\begin{aligned}
  	-d\bar{v}_t(y) &=\{ \tilde{ \mathcal L} \bar v_t(y) + \tilde{ \mathcal M} \zeta^N_t(y) +{\theta}(y) \hat{F}(t,y,(\theta^{-1} v^N_t)(y))\}\,dt-\bar{\zeta}_t(y) \, dW_{t}, \quad (t,y)\in [0,T]\times \bR^d;\\
    \bar{v}_T(y)&=G(y),  \quad y\in\bR^d.
   \end{aligned}\right.
\end{equation}
If $(G,\bar{\lambda}, \bar{\gamma}, \bar{\eta}) \geq (N,\lambda, \gamma, \eta)$, respectively, $(G,\bar{\lambda}, \bar{\gamma}, \bar{\eta}) \leq (N,\lambda, \gamma, \eta)$, then for almost all $(\omega,y)$ it holds that  
\[
	\bar{v}_t(y) \geq v^N_t(y), \quad \mbox{respectively}, \quad \bar{v}_t(y) \leq v^N_t(y), \quad \forall t \in [0,T].
\]
\end{corollary}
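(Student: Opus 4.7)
The plan is to reduce the comparison statement to the Lipschitz truncated BSPDE \eqref{BSPDE-singlr-N-M}, where a standard comparison principle (Corollary \ref{cor-comprn-frm-DuQIUTang}) applies, and then show that both $(v^{N},\zeta^{N})$ and $(\bar{v},\bar{\zeta})$ solve such a truncated equation once $M$ is chosen large enough. This is reasonable because the only source of super-Lipschitz behavior in the driver of \eqref{BSPDE-singlr-N} is the term $|v|^2/(\theta\eta)$, and on the \emph{a priori} known $L^{\infty}$-bound of $\theta^{-1}v$ this term coincides with its $M$-truncated Lipschitz version.

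First, I would use the proof of Proposition \ref{prop-N}, which gives the uniform bound $0 \leq \theta^{-1}v^{N}_t(y) \leq N + \Lambda T$, together with the hypothesis that $\theta^{-1}\bar{v}\in\cL^{\infty}_{\sF}(0,T;L^{\infty}(\bR^d))$, to select
\[
  M \;\geq\; \max\bigl\{\,N+\Lambda T,\;\|\theta^{-1}\bar{v}\|_{\cL^{\infty}_{\sF}(0,T;L^{\infty}(\bR^d))}\,\bigr\}.
\]
With this choice the truncation $(M\wedge|\theta^{-1}v|)$ is inactive at $v=v^{N}$ and at $v=\bar{v}$, so $(v^{N},\zeta^{N})$ solves \eqref{BSPDE-singlr-N-M} (with parameters $N,\lambda,\gamma,\eta$), and $(\bar{v},\bar{\zeta})$ solves the analogous truncated BSPDE with $(N,\lambda,\gamma,\eta)$ replaced by $(G,\bar{\lambda},\bar{\gamma},\bar{\eta})$.

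Second, I would verify the required driver monotonicity. Writing the common Lipschitz driver as
\[
  f^{\lambda,\gamma,\eta}(t,y,v,\zeta) := \tilde{\cL} v + \tilde{\cM}\zeta + \theta\lambda - \int_{\cZ}\frac{\theta^{-1}|v|^2}{\gamma_t(y,z)+|\theta^{-1}v|}\,\mu(dz) - \frac{(M\wedge|\theta^{-1}v|)|v|}{\eta},
\]
one checks term by term that, at any fixed $(t,y,v,\zeta)$, increasing $\lambda$ increases $\theta\lambda$, increasing $\gamma$ enlarges each denominator in the integrand and hence makes the (negative) integral term larger, and increasing $\eta$ makes the last (negative) term larger. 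Hence $(\bar{\lambda},\bar{\gamma},\bar{\eta})\geq(\lambda,\gamma,\eta)$ implies $f^{\bar{\lambda},\bar{\gamma},\bar{\eta}}\geq f^{\lambda,\gamma,\eta}$ pointwise, and the opposite inequality in the other direction. Together with the assumed inequality on the terminal data, the standard comparison principle for Lipschitz BSPDEs (Corollary \ref{cor-comprn-frm-DuQIUTang}) applied to the truncated BSPDEs yields $\bar{v}_t(y) \geq v^{N}_t(y)$, respectively $\bar{v}_t(y) \leq v^{N}_t(y)$, $\mathbb{P}\otimes dt\otimes dy$-a.e.

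The main technical point to keep honest is the first step: one must exploit the \emph{a priori} $L^{\infty}$-bounds on $\theta^{-1}v^{N}$ and $\theta^{-1}\bar{v}$ before invoking comparison, because the genuine driver $\hat{F}$ in \eqref{BSPDE-singlr-N} is only locally Lipschitz in $v$ and the appendix's comparison result is stated in a Lipschitz setting. Once both solutions are pushed into a common Lipschitz regime via $M$-truncation, the rest is bookkeeping on the signs of the driver increments.
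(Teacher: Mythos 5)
Your proposal is correct and follows essentially the same route as the paper: the paper's own (very terse) proof simply observes that, by the proof of Proposition~\ref{prop-N}, $(v^N,\zeta^N)$ coincides with the solution of the Lipschitz-truncated equation \eqref{BSPDE-singlr-N-M} for $M$ large, and then invokes Corollary~\ref{cor-comprn-frm-DuQIUTang}. Your write-up just makes explicit the two steps the paper leaves implicit — choosing $M$ to dominate both $N+\Lambda T$ and $\|\theta^{-1}\bar v\|_{\cL^\infty_\sF(0,T;L^\infty(\bR^d))}$, and checking the monotonicity of the truncated driver in $(\lambda,\gamma,\eta)$ — both of which are carried out correctly.
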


	We are now ready to prove existence of a solution to our singular BSPDE that satisfies the assumptions of the verification theorem.

\begin{theorem} \label{thm-existence}
  Let assumptions $(\mathcal{A}1)-(\mathcal{A}3)$ be satisfied. Then the BSPDE \eqref{BSPDE-singlr} admits a solution $(u,\psi)$ satisfying \eqref{ABC} and \eqref{eq-thm-u}. 
\end{theorem}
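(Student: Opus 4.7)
The plan is to construct $(u,\psi)$ as the pointwise monotone limit of the sequence $(u^N,\psi^N):=(\theta^{-1}v^N,\theta^{-1}\zeta^N)$ provided by Proposition~\ref{prop-N}. First, applying Corollary~\ref{cor-prop-N} with the trivial solution $\bar{v}\equiv 0$ of~\eqref{BSPDE-singlr-bar} (take $G=0$, $\bar{\lambda}=0$, $\bar{\gamma}=0$, $\bar{\eta}=\kappa_0$, all dominated by $(N,\lambda,\gamma,\eta)$) yields $u^N\geq 0$, which in turn implies $\hat{F}(\cdot,\cdot,u^N)=F(\cdot,\cdot,u^N)$, so that $(u^N,\psi^N)$ actually solves~\eqref{BSPDE-singlr} with the finite terminal value $u^N_T=N$. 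A second application of Corollary~\ref{cor-prop-N} with terminals $N$ and $N+1$ gives $u^N\leq u^{N+1}$, whence the pointwise limit $u:=\lim_{N}u^N$ is well defined.

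The second step is to obtain the two-sided bound~\eqref{eq-thm-u} by comparing $u^N$ with explicit deterministic, spatially constant super- and subsolutions. For the upper bound I would take $\bar{u}^N_t:=\Lambda+2\Lambda/(T-t+\delta_N)$ with $\delta_N:=2\Lambda/(N-\Lambda)$, for $N>\Lambda$. A direct computation shows $\bar{u}^N_T=N$ and that $\bar{u}^N$ solves the BSPDE with $\bar{\psi}\equiv 0$, $\bar{\gamma}\equiv+\infty$, $\bar{\eta}\equiv\Lambda$, and deterministic bounded coefficient $\bar{\lambda}^N_t=\Lambda+4\Lambda/(T-t+\delta_N)+2\Lambda/(T-t+\delta_N)^2\geq\Lambda$. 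Since $(\bar{\lambda}^N,\bar{\gamma},\bar{\eta})$ dominates $(\lambda,\gamma,\eta)$, Corollary~\ref{cor-prop-N} yields $u^N\leq\bar{u}^N$, and letting $N\to\infty$ gives $u\leq\Lambda+2\Lambda/(T-t)\leq c_1/(T-t)$. For the lower bound I would solve the Bernoulli ODE $\underline{u}'(t)=\mu(\mathcal{Z})\underline{u}(t)+\underline{u}(t)^2/\kappa_0$ with $\underline{u}^N(T)=N$, yielding the closed form
\[
\underline{u}^N_t=\frac{\mu(\mathcal{Z})\kappa_0}{e^{\mu(\mathcal{Z})(T-t)}(1+\mu(\mathcal{Z})\kappa_0/N)-1}
\]
(to be interpreted as $\kappa_0 N/(N(T-t)+\kappa_0)$ if $\mu(\mathcal{Z})=0$). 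This corresponds to a BSPDE solution with $\underline{\psi}\equiv 0$ and coefficients $(\underline{\lambda},\underline{\gamma},\underline{\eta})=(0,0,\kappa_0)\leq(\lambda,\gamma,\eta)$, so Corollary~\ref{cor-prop-N} (in the opposite direction) gives $u^N\geq\underline{u}^N$. Passing to the limit yields $u_t\geq\mu(\mathcal{Z})\kappa_0/(e^{\mu(\mathcal{Z})(T-t)}-1)\geq\kappa_0 e^{-\mu(\mathcal{Z})T}/(T-t)=:c_0/(T-t)$, using $e^x-1\leq xe^x$ for $x\geq 0$.

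The final step is to verify that $(u,\psi)$ is a solution in the sense of Definition~\ref{defn-solution}. For each $\tau\in(0,T)$ the uniform $L^\infty$-bound on $u^N|_{[0,\tau]}$ forces a uniform bound on the driver $F(\cdot,\cdot,u^N)$ on $[0,\tau]$. Standard BSPDE a priori estimates (Proposition~\ref{prop-bspde-duqiutang}) applied to $(\theta u^N,\theta\psi^N)$ on $[0,\tau]$ with terminal value $(\theta u^N)_\tau$ then provide uniform $\mathcal{H}^1\times\cL^2_{\sF}(0,\tau;H^{1,2}(\bR^d))$-bounds and, via the contraction estimate applied to differences, Cauchy convergence of $(\theta u^N,\theta\psi^N)$ in this space. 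The limit $(u,\psi)$ therefore satisfies~\eqref{ABC}, solves~\eqref{BSPDE-singlr} on every $[0,\tau]$ with $\tau<T$, and inherits the singular terminal condition $\lim_{t\uparrow T}u_t(y)=+\infty$ directly from the lower bound.

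The principal obstacle is establishing the blow-up rate in the lower bound: each subsolution $\underline{u}^N$ has only the finite terminal value $N$, so the $1/(T-t)$ divergence must be extracted by an asymptotic analysis of the limit $\underline{u}^\infty:=\lim_N\underline{u}^N$. The Bernoulli structure of the ODE is what makes this possible, since its closed-form solution has leading order $\kappa_0/(T-t)$ as $t\uparrow T$ independently of the terminal value $N$; this is also what pins down the dependence of the constants $c_0,c_1$ in~\eqref{eq-thm-u} on $\kappa_0,\Lambda,\mu(\mathcal{Z})$ and $T$.
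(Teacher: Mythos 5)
Your overall strategy is the same as the paper's: approximate by the finite-terminal-value problems \eqref{BSPDE-singlr-N}, use Corollary~\ref{cor-prop-N} against explicit deterministic comparison functions to get the two-sided $1/(T-t)$ bounds, pass to the monotone limit, and invoke the linear theory of Proposition~\ref{prop-bspde-duqiutang} to obtain \eqref{ABC}. Your lower comparison function is in fact algebraically identical to the paper's $\bar u^N$ (the paper writes $\frac{\kappa_0\mu(\mathcal Z)}{1-\frac{N}{N+\kappa_0\mu(\mathcal Z)}e^{-\mu(\mathcal Z)(T-t)}}-\kappa_0\mu(\mathcal Z)$, which equals your Bernoulli solution), and your upper bound via a supersolution with enlarged $\bar\lambda^N$ is a legitimate variant of the paper's exact solution $\Lambda\coth$-type barrier; both give $c_0,c_1$ of the same form. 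The nonnegativity and monotonicity steps are fine (nonnegativity is in fact already built into the proof of Proposition~\ref{prop-N}).

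The genuine gap is in your final regularity step. You claim that Proposition~\ref{prop-bspde-duqiutang}, applied on $[0,\tau]$ with terminal data $(\theta u^N)_\tau=v^N_\tau$, yields \emph{uniform-in-$N$} bounds in $\mathcal{H}^1\times\cL^2_{\sF}(0,\tau;H^{1,2}(\bR^d))$ and Cauchy convergence there. The $k=1$ estimate \eqref{est-prop-dqiutang} requires control of $\|v^N_\tau\|_{L^2(\Omega,\sF_\tau;H^{1,2}(\bR^d))}$, but the pointwise bounds $\bar u^N\le v^N\le\tilde u^N$ only give a uniform $L^2(\Omega;L^2(\bR^d))$ (indeed $L^\infty$) bound on $v^N_\tau$; they say nothing uniform about $Dv^N_\tau$, and applying the $k=1$ estimate from the true terminal time $T$ is useless because $\|N\theta\|_{H^{1,2}}\to\infty$ and the driver is not uniformly square-integrable near $T$. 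The paper closes this by a two-stage bootstrap: first run the $k=0$ stability estimate on $[0,\tau]$ (which needs only $L^2$ terminal data and the dominated convergence of the drivers) to conclude $v1_{[0,\tau]}\in\mathcal{H}^0$, in particular $v\in\cL^2_{\sF}(0,\tau;H^{1,2}(\bR^d))$; this guarantees that for any $\delta>0$ there exists $\tilde\tau\in(\tau-\delta,\tau]$ with $v_{\tilde\tau}\in L^2(\Omega,\sF_{\tilde\tau};H^{1,2}(\bR^d))$, and only then is the $k=1$ case of Proposition~\ref{prop-bspde-duqiutang} applied on $[0,\tilde\tau]$ to upgrade to $\mathcal{H}^1\cap\cL^{2,\infty}_{\sF}$. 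You need this (or some equivalent device) to legitimately obtain \eqref{ABC}; the rest of your argument stands.
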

\begin{proof}
	By Proposition \ref{prop-N}, for each $N>2\Lambda+\kappa_0\mu(\mathcal{Z})$, there exists a unique solution $(v^N,\zeta^N)$ to (\ref{BSPDE-singlr-N}) such that $(v^N,\zeta^N)\in (  \mathcal{H}^1\cap \cL^{2,\infty}_{\sF}(0,T))\times \cL^2_{\sF}(0,T;H^{1,2}(\bR^d))$ and $\theta^{-1} v^N\in \cL^{\infty}_{\sF}(0,T;L^{\infty}(\bR^d))$. If one replaces the triple $(\lambda,\gamma,\eta)$ by $(\Lambda,+\infty,\Lambda)$ and $(0,0,\kappa_0)$, respectively, then a direct computation shows that respective solutions to~\eqref{BSPDE-singlr-N} are given by $(\bar{u}^N,0)$ and $(\tilde{u}^N,0)$, where 
\begin{align*}
	\bar{u}^N_t(y) & :=	\frac{\kappa_0\mu(\mathcal{Z}){\theta}(y)}{1-\frac{N}{N+\kappa_0\mu(\mathcal{Z})}e^{-\mu(\mathcal{Z})(T-t)}}-\kappa_0\mu(\mathcal{Z}){\theta}(y), \\
	\tilde{u}^N_t(y) & :=\frac{2\Lambda{\theta}(y)}{1-\frac{N-\Lambda}{N+\Lambda}\cdot e^{-2(T-t)}}-\Lambda {\theta}(y).
\end{align*}
From Corollary \ref{cor-prop-N}, we conclude that for almost every $y\in\bR^d$, it holds almost surely that
\begin{equation*}
	\bar{u}^N_t(y)\leq v^N_t(y)\leq \tilde{u}^N_t(y), \quad t \in [0,T).
\end{equation*}
 Denoting by $v$ the limit of the increasing sequence $\{v^N\}$, we deduce that for almost every $y\in\bR^d$ that almost surely
\begin{align}	\label{est-prf-thm-solvablty-ul}
	\frac{\kappa_0e^{-\mu(\mathcal{Z})T}{\theta}(y)}{T-t}\leq  v_t(y)\leq \frac{\Lambda {e}^{2T}{\theta}(y)}{T-t}, \quad  t\in [0,T).
\end{align}
Further, by dominated convergence,
\begin{equation*}  
 \lim_{N\rightarrow \infty}
 \|{\theta}(\cdot)\hat{F}(\cdot,\cdot,(\theta^{-1} v_{\cdot}^N)(\cdot))
 -{\theta}(\cdot)F(\cdot,\cdot,(\theta^{-1} v_{\cdot})(\cdot))  \|_{\cL^2_{\sF}(0,\tau;L^2(\bR^d))}=0, \quad   \tau\in(0,T). 
\end{equation*}

	We now use $v$ to construct the desired solution by analyzing a BSPDE on $[0,\tau]$ with terminal value $v_\tau$. More precisely, let us denote by
 $$(\bar{v},\zeta)\in \left(\cL^2_{\sF}(0,\tau;H^{1,2}(\bR^d))\cap \cS^2_{\sF}([0,\tau];L^{2}(\bR^d))\right)\times \cL^2_{\sF}(0,\tau;L^{2}(\bR^d))$$
 the unique solution for the following BSPDE (guaranteed by Proposition~\ref{prop-bspde-duqiutang} as $v_\tau \in \cL^2(\Omega,\sF_{\tau}; L^2(\mathbb{R}^d))$ by~\eqref{est-prf-thm-solvablty-ul}):
\begin{equation*}
  \left\{\begin{aligned}
  -d\bar{v}_t(y) &=\{\tilde{ \mathcal L} \bar v_t(y) + \tilde{ \mathcal M} \zeta_t(y) +{\theta}(y) \hat{F}(t,y,(\theta^{-1} v_t)(y))\}\,dt-\zeta_t(y)\,dW_t, \quad (t,y)\in [0,\tau)\times \bR^d;\\
  \bar{v}_{\tau}(y)&= v_{\tau}(y),  \quad y\in\bR^d.
    \end{aligned}\right.
\end{equation*}

We use this equation to show that $v$ lies in the right space. In view of estimate \eqref{est-prop-dqiutang} in Proposition \ref{prop-bspde-duqiutang}, we have as $N\rightarrow +\infty$,
\begin{multline*}
	\|(v^N-\bar{v})1_{[0,\tau]}\|_{\mathcal{H}^0} +\|\zeta^N-\zeta\|_{\cL^2_{\sF}(0,T;L^{2}(\bR^d))}\\
    \leq C\Big(\|v^N_{\tau}-v_{\tau}\|_{L^2(\Omega,\sF_{\tau};L^2(\bR^d))} +\|{\theta}\hat{F}(\cdot,\cdot,(\theta^{-1} v_{\cdot}^N)(\cdot)) -{\theta}F(\cdot,\cdot,(\theta^{-1} v_{\cdot})(\cdot))  \|_{\cL^2_{\sF}(0,\tau;L^2(\bR^d))}\Big) \longrightarrow 0.
  \end{multline*}
Thus,  
\[\bar{v}=v1_{[0,\tau]}\in  \mathcal{H}^0=\cL^2_{\sF}(0,\tau;H^{1,2}(\bR^d))\cap \cS^2_{\sF}([0,\tau];L^{2}(\bR^d)).
\]	
Hence, for each $\delta \in (0,\tau)$ there exists $\tilde\tau\in (\tau-\delta,\tau]$ such that $v_{\tilde\tau}\in L^2(\Omega,\sF_{\tilde\tau};H^{1,2}(\bR^d))$, and by Proposition \ref{prop-bspde-duqiutang}, we further have
\[
	(v1_{[0,\tilde\tau]},\zeta1_{[0,\tilde\tau]})\in (\cL^{2,\infty}_{\sF}(0,\tilde\tau)\cap\mathcal{H}^1) \times \cL^2_{\sF}(0,\tilde\tau;H^{1,2}(\bR^d)).
\]
This shows that $(u,\psi):=(\theta^{-1} v,\theta^{-1} \zeta)$
is a solution to BSPDE~\eqref{BSPDE-singlr} with the desired properties. 
\end{proof}


\section{Uniqueness and regularity}

	In this section we show that the solution to the BSPDE~(\ref{BSPDE-singlr}) constructed in the previous section is the unique non-negative solution to~(\ref{BSPDE-singlr}). Subsequently, using the existing $L^p$-theory of BSPDEs, we consider the regularity of the solution.

\subsection{Uniqueness}

	The following uniqueness result is based on the observation that any non-negative solution to~\eqref{BSPDE-singlr} automatically satisfies the growth condition of the verification theorem. 

\begin{theorem} \label{thm-minimal}
	Under assumptions $(\mathcal{A}1)-(\mathcal{A}3)$, the solution $(u,\psi)$ given in Theorem~\ref{thm-existence} is the unique non-negative solution to~\eqref{BSPDE-singlr} in the sense that if $(\bar{u},\bar{\psi})$ is another solution that satisfies \eqref{ABC} and $\bar{u}\geq 0$, $\mathbb{P}\otimes dt \otimes dx\text{-a.e.}$, then
\begin{equation*}
  \bar{u}_t(y)= u_t(y),\quad \mathbb{P}\otimes dt \otimes dy\text{-a.e.}
\end{equation*}
\end{theorem}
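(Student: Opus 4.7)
The argument hinges on Theorem~\ref{thm-verification}: any solution to~\eqref{BSPDE-singlr} satisfying both~\eqref{ABC} and the two-sided growth bound~\eqref{eq-thm-u} must represent the value function via $\hat u_t(y)x^2=V(t,y,x)$ for almost every $y\in\bR^d$. Applied both to $u$ from Theorem~\ref{thm-existence} and to the candidate $\bar u$, this forces $\bar u=u$ almost everywhere provided~\eqref{eq-thm-u} is verified for $\bar u$. Hence the entire uniqueness question reduces to showing that every non-negative solution $\bar u$ satisfying~\eqref{ABC} automatically satisfies~\eqref{eq-thm-u}.

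\textbf{Bracketing by explicit singular-terminal solutions.} For $\phi\geq 0$, assumptions $(\mathcal A1)$--$(\mathcal A3)$ yield the pointwise bracket
\[
 -\mu(\mathcal Z)\phi-\phi^2/\kappa_0\ \leq\ F(t,y,\phi)\ \leq\ \Lambda-\phi^2/\Lambda,
\]
whose right-hand (resp.~left-hand) side is precisely the driver associated with the explicit super-solution $\tilde u^N$ (resp.~sub-solution $\bar u^N$) built in the proof of Theorem~\ref{thm-existence}, corresponding to the parameter choices $(\lambda,\gamma,\eta)=(\Lambda,+\infty,\Lambda)$ and $(0,0,\kappa_0)$. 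Their monotone limits $\tilde u^\infty:=\lim_N\tilde u^N$ and $\bar u^\infty:=\lim_N\bar u^N$ are explicit Riccati singular-terminal solutions with the growth rates already noted in~\eqref{est-prf-thm-solvablty-ul}: $\bar u^\infty_t(y)\geq \kappa_0 e^{-\mu(\mathcal Z)T}\theta(y)/(T-t)$ and $\tilde u^\infty_t(y)\leq \Lambda e^{2T}\theta(y)/(T-t)$. It therefore suffices to prove the sandwich $\bar u^\infty\leq \theta\bar u\leq \tilde u^\infty$ on $[0,T)\times\bR^d$.

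\textbf{Comparison strategy and main obstacle.} Since $\bar u\geq 0$ forces $F(t,y,\bar u)=\hat F(t,y,\bar u)$, on each $[0,\tau]$ with $\tau<T$ the pair $(\theta\bar u,\theta\bar\psi)$ solves the finite-terminal weighted BSPDE~\eqref{BSPDE-singlr-1} with driver $\theta\hat F$ and terminal $\theta\bar u_\tau\in L^2(\Omega,\sF_\tau;H^{1,2}(\bR^d))$. My plan is, for each truncation level $M\in\bN$: (i)~consider on $[0,\tau]$ the BSPDE with the dominating driver $\theta F^{up}$ (resp.~$\theta F^{low}$) and truncated terminal $(\theta\bar u_\tau)\wedge(M\theta)$, and invoke Corollary~\ref{cor-prop-N} to bracket its solution above by $\tilde u^M$ (resp.~below by $\bar u^M$); (ii)~use a second BSPDE comparison with the original driver $\theta\hat F$ to dominate $(\theta\bar u)\wedge(M\theta)$ by this truncated solution; (iii)~let $M\to\infty$ by monotone convergence and then $\tau\uparrow T$. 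The principal difficulty is that Corollary~\ref{cor-prop-N} requires $\theta^{-1}\bar v\in\cL^\infty_{\sF}(0,T;L^\infty(\bR^d))$, which $\theta\bar u$ itself is not known to satisfy; the truncation at level $M$ is exactly the device bringing the comparison within the scope of that corollary, and the delicate point is to verify that the double limit preserves the sandwich and reproduces the correct blow-up at~$T$. Once the sandwich is established, \eqref{eq-thm-u} holds for $\bar u$ and Theorem~\ref{thm-verification} closes the argument, yielding $\bar u=u$ $\mathbb P\otimes dt\otimes dy$-a.e.
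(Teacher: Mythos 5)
Your reduction (via Theorem~\ref{thm-verification}, uniqueness follows once every non-negative solution satisfying \eqref{ABC} is shown to obey the growth bound \eqref{eq-thm-u}) and your choice of explicit Riccati-type brackets coincide with the paper's strategy, and the driver inequalities $-\mu(\mathcal Z)\phi-\phi^2/\kappa_0\le F(t,y,\phi)\le\Lambda-\phi^2/\Lambda$ are correct. The gaps are in the comparison machinery. First, your central device --- comparing the \emph{truncated} process $(\theta\bar u)\wedge(M\theta)$ with an auxiliary BSPDE solution --- is not a valid application of Corollary~\ref{cor-prop-N} or Corollary~\ref{cor-comprn-frm-DuQIUTang}: the truncation of a solution is not itself a solution of a BSPDE with a Lipschitz (linearizable) driver, so there is no comparison principle to invoke. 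The genuine obstruction, which you correctly identify but do not resolve, is that for an unbounded $\bar u$ the term $\eta^{-1}|\theta\bar u|^2$ is only in $\cL^1_{\sF}(0,t;L^1(\bR^d))$; the paper gets around this not by truncation but by proving an It\^o-type inequality for the positive part of the difference (Lemma~\ref{lemma-appendix-ito}), which tolerates an $\cL^1$ drift with the sign condition $h_s u_s^+\le 0$, and then exploits the one-sided monotonicity $\left(F(t,y,\theta^{-1}\phi_1)-F(t,y,\theta^{-1}\phi_2)\right)(\phi_1-\phi_2)^+\le 0$ together with $(v^N-\bar v)^+\le|v^N|$ (here non-negativity of $\bar u$ enters) and $\lim_{\tau\uparrow T}\bar u_\tau=+\infty$ to kill the terminal term via Fatou.

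Second, and more seriously, your upper bound is circular as stated. To compare $\theta\bar u$ on $[0,\tau]$ with $\tilde u^M$ (or any finite super-solution) you must first know that the terminal values are ordered at $\tau$, i.e.\ $\theta\bar u_\tau\le\tilde u^M_\tau$; but an a priori upper bound on $\bar u_\tau$ is exactly what you are trying to prove, and the double limit $M\to\infty$, $\tau\uparrow T$ cannot be arranged to avoid this. The paper's resolution is a different idea you do not mention: compare $\bar u$ on $[0,T-\delta)$ with the \emph{time-shifted} singular solution $\hat u_{\cdot+\delta}=\Lambda\coth(T-\delta-\cdot)$, which blows up at $T-\delta$. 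Since $\hat u_{\tau+\delta}\to+\infty$ while $\bar u_\tau$ remains finite ($\mathbb P\otimes dy$-a.e.) as $\tau\uparrow T-\delta$, the positive part $(\theta\bar u_\tau-\theta\hat u_{\tau+\delta})^+$ vanishes in the limit, the same $\cL^1$ It\^o inequality closes the estimate, and one concludes $\bar u_t\le\Lambda e^{2T}/(T-\delta-t)$ before letting $\delta\downarrow 0$. Without the time shift (or an equivalent device) the upper half of your sandwich does not get off the ground.
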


\begin{proof}
	In view of Theorem~\ref{thm-verification}, to establish the uniqueness statement it is sufficient to verify that $\bar u$ satisfies the growth condition \eqref{eq-thm-u}.

	Set $(\bar{v},\bar{\zeta})=({\theta}\bar{u},{\theta}\bar{\psi})$ and for $N\in\bN$, let $(v^N,\zeta^N)$ be the unique solution to~\eqref{BSPDE-singlr-N}. From the proof for Theorem \ref{thm-existence} we see that to establish the lower bound in~\eqref{eq-thm-u} one needs only to prove
\begin{equation} \label{eq-prf-thm-minimal}
  \bar{v}_t(y)\geq v^N_t(y),\quad \mathbb{P}\otimes dt \otimes dy\text{-a.e.}
\end{equation}
Putting $(\tilde{v},\tilde{\zeta})=(v^N-\bar{v},\zeta^N-\bar{\zeta})$ and noticing that for the moment one only has that $\eta^{-1}|\bar{v}|^2$ lies in $\mathcal{L}^1_{\sF}(0,t;L^1(\bR^d))$ instead of $\mathcal{L}^2_{\sF}(0,t;L^2(\bR^d))$, we apply the inequality for BSPDEs stated in Lemma \ref{lemma-appendix-ito} in the appendix. Since
\begin{equation*} 
  \left(F(t,y,(\theta^{-1}\phi_1)(y))- F(t,y,(\theta^{-1}\phi_2)(y))\right)(\phi_1-\phi_2)^+(y)\leq 0, \quad \mathbb{P}\otimes dt \otimes dy\text{-a.e.,}
\end{equation*}
for any pair of non-negative measurable functions  $\phi_1$ and $\phi_2 $ on $\bR^d$, and because $\sigma$ and $\bar{\sigma}$ are bounded and Lipschitz continuous, we obtain from that lemma for $\tau\in(0,T)$ and $t\in(0,\tau)$,
\begin{multline*}
	E\,\bigg\{ \|\tilde{v}^+_t\|^2_{L^2(\bR^d)}+\int_t^{\tau}\|\tilde{\zeta}_s1_{u> u_1}\|^2_{L^2(\bR^d)}\,ds \bigg\}\\
	\leq E\,\bigg\{ \|\tilde{v}^+_{\tau}\|^2_{L^2(\bR^d)}+\int_t^{\tau}2\langle \tilde{v}^+_s,\,a^{ij}_s\partial^2_{y_iy_j}\tilde{v}_s +\sigma^{jr}_s\partial_{y_j}\tilde{\zeta}^r_s +\tilde{b}^i_s\partial_{y^i} \tilde{v}_s +\beta_s^{\mathcal{T}}\tilde{\zeta}_s+c_s\tilde{v}_s \rangle\,ds\bigg\},
\end{multline*}
where the summation convention is applied. In view of assumptions $(\mathcal{A} 1-\mathcal{A} 3)$, using H\"{o}lder's inequality and the integration-by-parts formula, by adopting the ``standard machinery'' (see for instance \cite{QiuTangMPBSPDE11,QiuWei-RBSPDE-2013}) for linear equations, we  arrive at
\begin{multline*}
	E\,\bigg\{ \|\tilde{v}^+_t\|^2_{L^2(\bR^d)}+\int_t^{\tau}\|\tilde{\zeta}_s1_{u> u_1}\|^2_{L^2(\bR^d)}\,ds\bigg\}\\
	\leq E\bigg\{\|\tilde{v}^+_{\tau}\|^2_{L^2(\bR^d)} +\int_t^{\tau}\left\{C\|\tilde{v}^+_s\|_{L^2({\bR^d})}^2- \tfrac{1}{2}\kappa\|D \tilde{v}^+_s\|^2_{L^2(\bR^d)}+\|\tilde{ \zeta}_s1_{v^N>\bar{v}}\|^2_{L^2(\bR^d)}\right\}ds\bigg\}.
\end{multline*}
By Gronwall's inequality this implies
\[
E\|\tilde{v}^+_t\|^2_{L^2(\bR^d)}\leq C E \|\tilde{v}^+_{\tau}\|^2_{L^2(\bR^d)},
\]
where $C$ is independent of $\tau$ and $t$. As $\theta^{-1} v^N\in \cL^{\infty}_{\sF}(0,T;L^{\infty}(\bR^d))$ and $v^N\in\mathcal{H}^1$ by Proposition \ref{prop-N}, and
\[
	\tilde{v}^+=(v^N-\bar{v})^+\leq |v^N|,\quad  \mathbb{P}\otimes dt \otimes dy\text{-a.e.},
\]
we have by Fatou's lemma
\begin{align*}
	\int_{[0,T]\times\bR^d}E|\tilde{v}^+_t(y)|^2\,dydt\leq CT \limsup_{\tau\uparrow T} \int_{\bR^d}E |\tilde{v}^+_{\tau}(y)|^2  \,dy\leq CT \int_{\bR^d}E\limsup_{\tau\uparrow T} |\tilde{v}^+_{\tau}(y)|^2 \,dy=0.
\end{align*}
Hence, the lower bound of \eqref{eq-thm-u} holds for $\overline u$.

	To establish the upper bound in \eqref{eq-thm-u} we extend an argument given in \cite{GraeweHorstSere13} and consider the deterministic function
\[ 
	\hat u_t:=\Lambda\coth(T-t)=\frac{2\Lambda}{1-e^{-2(T-t)}}-\Lambda\leq \frac{\Lambda e^{2T}}{T-t}.
\]
Then, $(\hat u, 0)$ is a solution to \eqref{BSPDE-singlr} with the triple $(\lambda,\gamma,\eta)$ being replaced by $(\Lambda,+\infty,\Lambda)$. Moreover, $(\hat u, 0)$ remains a solution  when shifted in time, i.e., for $\delta\in[0,T)$ the pair $(\hat u_{\,\cdot\,+\delta} ,0)$ is the solution to \eqref{BSPDE-singlr} associated with $(\Lambda,+\infty,\Lambda)$, but with a singularity at $t=T-\delta$. Hence, noting that
\[\left(F(t,y,(\theta^{-1}\phi_1)(y))-\Lambda+\Lambda^{-1}|(\theta^{-1}\phi_2)(y)|^2\right)(\phi_1-\phi_2)^+(y)\leq 0, \quad \mathbb{P}\otimes dt \otimes dy\text{-a.e.,}\]
for any pair of non-negative measurable functions  $\phi_1$ and $\phi_2 $ on $\bR^d$, using arguments similar to those used in the first part of this proof, we conclude
\begin{equation*}
	\int_{[0,T-\delta]\times\mathbb R^d}E|(\theta\bar u_t-\theta\hat u_{t+\delta})^+(y)|^2\,dydt\leq C(T-\delta)\int_{\mathbb R^d} E\limsup_{\tau\uparrow T-\delta}|(\theta\bar u_{\tau}-\theta\hat u_{\tau+\delta})^+(y)|^2\,dy=0.
\end{equation*}
This yields, 
\[
	\bar u_t(y)\leq\frac{\Lambda e^{2T}}{T-\delta-t}, \quad \mathbb{P}\otimes dt \otimes dy\text{-a.e.}
\]
Finally, letting $\delta\rightarrow 0$ we obtain the desired upper bound.
\end{proof}

\subsection{Regularity}

	We proved so far that, under assumptions $(\mathcal{A}1)-(\mathcal{A}3)$, the BSPDE~\eqref{BSPDE-singlr} admits a unique non-negative solution $(u,\psi)$ that satisfies~\eqref{ABC}. This solution automatically satisfies the growth condition \eqref{eq-thm-u} and $V(t,y,x):=u_t(y)x^2$ coincides with the value function of \eqref{value-func} for almost every $y\in\bR^d$. 

	{Inspired by the $L^p$-theory ($p>2$) of BSPDEs, we now prove additional regularity properties of $u$ under the following additional assumption:
\begin{itemize}
  \item[$(\mathcal{A}4)$] $\sigma$ is spatially invariant (does not depend on $y$).
\end{itemize}}

\begin{theorem}
	Under assumptions $(\mathcal{A}1)-(\mathcal{A}4)$, let $(u,\psi)$ be the unique non-negative solution to~\eqref{BSPDE-singlr} that satisfies~\eqref{ABC}. Then, for any $p\in(2,+\infty)$,
\[
\theta(\cdot) u_{\cdot}\Big(\cdot+\int_0^{\cdot}\sigma_s\,dW_s\Big)
\in\bigcap\limits_{\tau\in(0,T)}\bigcap\limits_{\delta\in(0,1)}\cL^{2,\infty}_{\sF}(0,\tau)\cap  \cS^p_{\sF}([0,\tau];C^{\delta}(\bR^d)).
\]
Furthermore, the function $V(t,y,x):=u_t(y)x^2$ coincides with the value function of \eqref{value-func} for every $y\in\bR^d$. 
\end{theorem}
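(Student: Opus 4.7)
My plan exploits assumption $(\mathcal{A}4)$ to remove the cross term $\tilde{\mathcal{M}}\zeta$ from the BSPDE \eqref{BSPDE-singlr-1} via a random spatial shift that turns the equation into a fully parabolic BSPDE, then applies the $L^p$-theory for parabolic BSPDEs and finishes by Sobolev embedding to obtain pointwise H\"older regularity.

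\textbf{Step 1 (Random shift to parabolic form).} Set $\xi_t:=\int_0^t\sigma_s\,dW_s$ and, for $(v,\zeta):=({\theta}u,{\theta}\psi)$ satisfying \eqref{BSPDE-singlr-1}, define
\[
\mathfrak{v}_t(z):=v_t(z+\xi_t),\qquad \mathfrak{z}_t(z):=\zeta_t(z+\xi_t)+\sigma_t^{\mathcal{T}}D\mathfrak{v}_t(z).
\]
Because $\sigma$ depends only on $(\omega,t)$ by $(\mathcal{A}4)$, the generalized It\^o-Kunita-Wentzell formula (Lemma \ref{lem-Yang-Tang-13}) produces an equation for $\mathfrak{v}$ in which (i) the first-order coupling $\tr(D\zeta\,\sigma^{\mathcal{T}})$ inside $\tilde{\mathcal{M}}\zeta$ cancels exactly with the cross-variation term arising from the shift, and (ii) $\tr(aD^2v)-\tfrac12\tr(\sigma\sigma^{\mathcal{T}}D^2v)=\tfrac12\tr(\bar\sigma\bar\sigma^{\mathcal{T}}D^2\mathfrak{v})$. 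Substituting $\zeta(z+\xi_t)=\mathfrak{z}(z)-\sigma_t^{\mathcal{T}}D\mathfrak{v}(z)$ absorbs the remaining $\beta^{\mathcal{T}}\zeta$ contribution into zero- and first-order terms, yielding on $[0,\tau]$ for any $\tau<T$ the BSPDE
\[
-d\mathfrak{v}_t=\bigl\{\tfrac12\tr(\bar\sigma\bar\sigma^{\mathcal{T}}D^2\mathfrak{v})+\bar b^{\mathcal{T}}D\mathfrak{v}+\bar c\mathfrak{v}+\bar\beta^{\mathcal{T}}\mathfrak{z}+\bar F_t(z,\mathfrak{v}_t(z))\bigr\}\,dt-\mathfrak{z}_t\,dW_t,
\]
with coefficients evaluated at $z+\xi_t$ (hence bounded and $\sP$-measurable), uniformly non-degenerate principal part by $(\mathcal{A}3)$, and $\mathfrak{z}$ appearing only through a zero-order coupling.

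\textbf{Step 2 ($L^p$-regularity on $[0,\tau]$).} Fix $\tau\in(0,T)$ and a large $p\in(2,\infty)$. The two-sided bound $c_0/(T-t)\le u_t\le c_1/(T-t)$ from Theorem \ref{thm-main}(i) combined with the decay of $\theta$ shows that the driver $\bar F_t(z,\mathfrak{v}_t)$ is uniformly bounded and that $\mathfrak{v}$ is uniformly bounded. A one-step bootstrap on a short interval $[\tau,\tau']\subset[0,T)$---taking as input the $\mathcal{H}^1$-regularity established in Theorem \ref{thm-existence} and applying the parabolic $L^p$-theory to the truncated equation there---upgrades the terminal datum $\mathfrak{v}_{\tau'}$ to $L^p(\Omega,\sF_{\tau'};H^{2,p}(\bR^d))$. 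Applying the $L^p$-theory for parabolic BSPDEs from \cite{DuQiuTang10} on $[0,\tau']$ (the zero-order term $\bar\beta^{\mathcal{T}}\mathfrak{z}$ is absorbed by a standard Gronwall-type argument) then gives
\[
\mathfrak{v}\in\cS^p_{\sF}\bigl([0,\tau];H^{2,p}(\bR^d)\bigr)\cap\cL^{p,\infty}_{\sF}(0,\tau),\qquad \mathfrak{z}\in\cL^p_{\sF}\bigl(0,\tau;H^{1,p}(\bR^d)\bigr).
\]

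\textbf{Step 3 (H\"older upgrade and transfer).} Given $\delta\in(0,1)$, choose $p>d/(2-\delta)$ so that Morrey's inequality yields $H^{2,p}(\bR^d)\hookrightarrow C^\delta(\bR^d)$; then $\mathfrak{v}\in\cS^p_{\sF}([0,\tau];C^\delta(\bR^d))$. Writing
\[
{\theta}(z)\,u_t(z+\xi_t)=\frac{{\theta}(z)}{{\theta}(z+\xi_t)}\,\mathfrak{v}_t(z)
\]
and noting that $(1+|z+\xi_t|^2)^q/(1+|z|^2)^q$ is $C^1(\bR^d)$ with norm bounded by a polynomial in $|\xi_t|$---which has all moments on $[0,\tau]$---a H\"older-in-$\omega$ argument (with $p$ enlarged slightly) transfers the $C^\delta$ and $\cL^{p,\infty}$-control from $\mathfrak{v}$ to ${\theta}u_\cdot(\cdot+\xi_\cdot)$. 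Since $\tau\in(0,T)$, $p\in(2,\infty)$, $\delta\in(0,1)$ were arbitrary, this yields the asserted membership in $\bigcap_\tau\bigcap_\delta\cL^{2,\infty}_{\sF}(0,\tau)\cap\cS^p_{\sF}([0,\tau];C^\delta(\bR^d))$.

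\textbf{Step 4 (Pointwise value-function identity).} The $\cS^p(C^\delta)$-regularity gives, on a set of full $\bP$-measure, a jointly continuous version of $\theta(z)u_t(z+\xi_t)$ on $[0,\tau]\times\bR^d$; undoing the shift $z\mapsto y-\xi_t$ produces a jointly continuous version of $u_t(y)$ in $(t,y)\in[0,\tau]\times\bR^d$ for every $\tau<T$. Re-running the verification argument of Theorem \ref{thm-verification} with this continuous version promotes every $dy$-a.e.\ identity provided by the It\^o-Kunita-Wentzell formula to a pointwise-in-$y$ identity, yielding $V(t,y,x)=u_t(y)x^2$ as the value function of \eqref{value-func} for \emph{every} $y\in\bR^d$.

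The main obstacle is Step 2: rigorously matching the $L^p$-theory of \cite{DuQiuTang10} to the transformed weighted equation (including the zero-order coupling $\bar\beta^{\mathcal{T}}\mathfrak{z}$), bootstrapping the available $\mathcal{H}^1$-regularity to the $H^{2,p}$-regularity of the terminal datum, and controlling the estimates so they remain finite up to any chosen $\tau<T$ despite the blow-up as $\tau\uparrow T$.
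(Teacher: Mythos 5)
Your overall strategy coincides with the paper's: use $(\mathcal{A}4)$ to perform a random spatial shift along $\int_0^{\cdot}\sigma_s\,dW_s$ that removes the first-order coupling $\tr(D\zeta\,\sigma^{\mathcal{T}})$, apply the $L^p$-theory of \cite{DuQiuTang10} to the resulting super-parabolic equation, upgrade to $C^{\delta}$ by Sobolev embedding, and rerun the verification argument with the continuous version. There are, however, two genuine gaps.

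First, your transformed unknown $\mathfrak{v}_t(z)=v_t(z+\xi_t)=\theta(z+\xi_t)u_t(z+\xi_t)$ carries the \emph{random} weight $\theta(z+\xi_t)$, so the residual piece $\beta^{\mathcal{T}}\zeta$ of $\tilde{\mathcal{M}}$ survives as the coupling $\bar\beta^{\mathcal{T}}\mathfrak{z}$ that you acknowledge in Step~1. The paper states explicitly that the results of \cite{DuQiuTang10} ``do not allow the linear term $\beta^{\mathcal T}\zeta^N$ in the drift part of the BSPDE,'' and this is precisely why it instead sets $\tilde v^N_t(y)=\theta(y)\,(\theta^{-1}v^N_t)(y^y_t)$ --- unweight, shift, then re-weight with the \emph{deterministic} $\theta(y)$ --- which eliminates every occurrence of $\zeta$ from the drift (see \eqref{eq1-sec7}). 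Your proposal to ``absorb'' $\bar\beta^{\mathcal{T}}\mathfrak{z}$ by a Gronwall-type argument is exactly the step that is unavailable in the $L^p$ setting for $p>2$: it works in the $L^2$ energy estimate but not in \cite[Proposition~6.4]{DuQiuTang10}, which is the tool you invoke. The fix is simply to adopt the paper's weighting; your own Step~3 conversion factor $\theta(z)/\theta(z+\xi_t)$ already contains the needed correction, but it must be applied \emph{before} the $L^p$-theory is used, not after.

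Second, you apply the $L^p$-theory directly to the transformed singular solution and bootstrap its terminal datum. The paper instead works with the approximations $(v^N,\zeta^N)$, whose terminal data $N\theta$ are smooth, and must then \emph{identify} the transformed process $\tilde v^N$ --- known a priori only as an element of the $L^2$-based class $\mathcal{H}^1$ satisfying \eqref{eq1-sec7} --- with the $H^{2,p}$-regular solution $\bar v^N$ of \eqref{BSPDE-sec7} produced by the $L^p$-theory. This identification is carried out via a BSDE representation along the characteristics $\tilde y^{s,y}_t$, using Lemma~\ref{lem-Yang-Tang-13}, Corollary~\ref{cor-ito-wentzell} and uniqueness for the resulting BSDE, and it is a substantial portion of the proof. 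Your Step~2 silently assumes that the $L^2$-solution you transformed and the $L^p$-solution whose regularity you quote are the same object, and your claim that the terminal datum $\mathfrak{v}_{\tau'}$ can be ``upgraded'' to an $L^p$-based Sobolev space from $\mathcal{H}^1$-regularity alone is asserted rather than proved. Both points need an argument of the kind the paper supplies before the conclusion, and in particular the pointwise statement about the value function, can be drawn.
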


\begin{proof}
For each $N\in\bN$, let $(v^N,\zeta^N)$ be the unique solution to the BSPDE \eqref{BSPDE-singlr-N}. Our goal is to derive additional regularity properties under $(\mathcal{A}4)$ using the $L^p$-theory for BSPDEs developed in \cite{DuQiuTang10}. 

The results of \cite{DuQiuTang10} do not allow the linear term $\beta^{\mathcal T}\zeta^N$ in the drift part of the BSPDE, though. To overcome this problem,  we make the following change of variables:
{\allowdisplaybreaks
\begin{align*}
  y_t^y &:=  y+\int_0^t\sigma_s\,dW_s, \quad (t,y)\in[0,T]\times\bR^d;\\
  \bar{a}_s(y) &:=  \frac{1}{2}\bar{\sigma}_s(y)\bar\sigma_s^{\mathcal{T}}(y), \quad y\in\bR^d;\\
	(\tilde{u}^N_t,\tilde{\psi}^N_t)(y)&:=(\theta^{-1} v^N_t,\theta^{-1} \zeta^N_t+\sigma^{\mathcal{T}}_tD(\theta^{-1} v^N_t))(y_t^y), \quad (t,y)\in[0,T]\times\bR^d;\\
	(\tilde{v}^N_t,\tilde{\zeta}^N_t)(y) &:=({\theta}\tilde{u}^N_t,{\theta}\tilde{\psi}^N_t)(y), \quad (t,y)\in[0,T]\times\bR^d.
 \end{align*}}%
%
Then, applying the It\^o-Wentzell formula for distribution-valued processes (see \cite[Theorem~1]{Krylov_09}), we have almost surely that
\begin{multline} \label{eq1-sec7}
		\tilde{v}^N_t(y) =  N{\theta}(y)+ \int_t^{T} \Big\{\tr\left( \bar{a}_s(y_s^y)D^2 \tilde{v}^N_s(y)\right)+ \bar{b}^{\mathcal{T}}_s(y)D\tilde{v}^N_s(y)+\bar{c}_s(y)\tilde{v}^N_s(y)
		+{\theta}(y)\hat{F}(s,y_s^y,(\theta^{-1}\tilde{v}^N_s)(y))\Big\}\,ds\\-\int_t^{T}\tilde{\zeta}^N_s(y)\, dW_{s}, \quad dy\text{-a.e.}  \quad \forall t \in[0,T]
\end{multline}
with
\begin{align*}
	\bar{b}_t^i(y) &:= b^i_t(y_t^y)+\frac{4q}{1+|y|^2}\sum_{j=1}^d a^{ij}_t(y_t^y)y^j, \quad i=1,\ldots,d;\\
   \bar{c}_t(y) &:=\frac{2q}{1+|y|^2}\bigg(\tr(a_t(y_t^y))+\sum_{i=1}^dy^ib^i_t(y_t^y)+\frac{2(q-1)}{1+|y|^2}\sum_{i,j=1}^da_t^{ij}(y_t^y)y^iy^j\bigg).
\end{align*}
From this representation we see that we also have a BSDE representation of $(\tilde{v}^N, \tilde \zeta^N)$ from which we will obtain strong regularity properties. Specifically, by Proposition~\ref{prop-bspde-duqiutang}, there exists a unique solution
  $$(\bar{v}^N,\bar{\zeta}^N)\in \left( \mathcal{H}^1\cap \cL^{2,\infty}_{\sF}(0,T) \right)\times \cL^2_{\sF}(0,T;H^{1,2}(\bR^d)),$$
to the BSPDE
\begin{equation} \label{BSPDE-sec7}
  \left\{\begin{aligned}
  		-d\bar{v}^N_t(y) &= \begin{aligned}[t]&\Big\{\tr\left( \bar{a}_t(y_t^y) D^2 \bar{v}^N_t(y)\right)+\bar{b}^{\mathcal{T}}_t(y)D \bar{v}^N_t(y)+\bar{c}_t(y)\bar{v}^N_t(y)+{\theta}(y)\bar{\lambda}_t(y_t^y)-\frac{\left|\tilde{v}^N_t(y)\bar{v}^N_t(y)\right|}{\theta(y)\bar{\eta}_t(y_t^y)}\\&-\int_{\mathcal{Z}}\frac{\theta^{-1}(y) |\bar{v}^N_t(y)|^2}{\bar{\gamma}_t(y_t^y,z) +|\theta^{-1}(y)\bar{v}^N_t(y)|}\mu(dz)\Big\}\,dt -\bar{\zeta}^N_t(y)\, dW_{t}, \quad (t,y)\in [0,T]\times \bR^d;\end{aligned}\\
    	\bar{v}^N_T(y)&= N{\theta}(y),  \quad y\in\bR^d.
    \end{aligned}\right.
\end{equation}
By definition, the solution satisfies \eqref{eq1-sec7}. As $\theta^{-1} \tilde{v}^N\in \cL^{\infty}_{\sF}(0,T;L^{\infty}(\bR^d))$ we can use the comparison principle stated in Corollary \ref{cor-comprn-frm-DuQIUTang} to deduce (similarly to the proof of Proposition \ref{prop-N}) that $\theta^{-1} \bar{v}^N\in \cL^{\infty}_{\sF}(0,T;L^{\infty}(\bR^d))$. Hence, by \cite[Proposition 6.4]{DuQiuTang10}, we further have 
\[
	\bar{v}^N\in \cS^p_{\sF}([0,T];H^{1,p}(\bR^d))\cap \cL^p_{\sF}(0,T;H^{2,p}(\bR^d)) \quad \text{for any }
	p\in(2,+\infty).
\]	
Thus, by Sobolev embedding theorem, $\bar{v}^N\in \cS^p_{\sF}([0,T];C^{\delta}(\bR^d))$, for any $\delta\in(0,1)$. Therefore, $\bar{v}^N_t(y)$ is almost surely continuous in $(t,y)\in[0,T]\times\bR^d$.

	Next, we are going to show that 
\[
	\tilde{v}^N_t(y)=\bar{v}^N_t(y),\quad \mathbb{P}\otimes dy\textrm{-a.e.}
\]
To this end, we show that both $(\tilde{v}^N,\tilde{\zeta}^N)$ and $(\bar{v}^N,\bar{\zeta}^N)$ satisfy the same BSDE. Specifically, let 
\begin{equation*} 
  \tilde{y}_t^{s,y} :=y+\int_s^t\bar{b}_r(\tilde{y}^{s,y}_r)\,dr +\int_s^t\bar{\sigma}_r(y_r^{\tilde{y}_r^{s,y}})\,dB_r, \quad 0\leq s\leq t\leq T.
\end{equation*}
Since $(\tilde{v}^N_t,\ \tilde{\zeta}_t^y)(y)={\theta}(y)(\theta^{-1} v^N_t,\ \theta^{-1}\zeta^N_t+\sigma^{\mathcal{T}}_tD(\theta^{-1} v^N_t))(y_t^{y})$, one checks through standard but tedious computations that both $\bar{v}^N$ and $\tilde{v}^N$ are bounded and satisfy the following BSDE:
\begin{multline*} 
	\check{v}^N_t(\tilde{y}_t^{s,y})  =  N{\theta}(\tilde{y}_T^{s,y}) +\int_t^T\bigg\{\bar{c}_r(\tilde{y}_r^{s,y})\check{v}^N_r(\tilde{y}_r^{s,y}) +{\theta}(\tilde{y}_r^{s,y})\bar{\lambda}_t(y_r^{\tilde{y}_r^{s,y}}) -\frac{\theta^{-1}(\tilde{y}_r^{s,y}) \left|\check{v}_t(\tilde{y}_r^{s,y})\tilde{v}_t(\tilde{y}_r^{s,y})\right|}{\bar{\eta}_t(y_r^{\tilde{y}_r^{s,y}})}
	\\-\int_{\mathcal{Z}}\frac{\theta^{-1}(\tilde{y}_r^{s,y}) |\check{v}_t(\tilde{y}_r^{s,y})|^2}{\bar{\gamma}_t(y_r^{\tilde{y}_r^{s,y}},z)+\theta^{-1}(\tilde{y}_r^{s,y}) |\check{v}_t(\tilde{y}_r^{s,y})|}\mu(dz)\bigg\}\,dr -\int_t^T\check{\zeta}^N_r(\tilde{y}_r^{s,x})\,dW_r- \int_t^T\bar{\sigma}^{\mathcal{T}}_r(\tilde{y}_r^{s,y})D\check{v}^N_r(\tilde{y}_r^{s,x})\,dB_r.
\end{multline*}
This BSDE has a unique solution. In view of Lemma \ref{lem-Yang-Tang-13} and Corollary \ref{cor-ito-wentzell}, we therefore conclude
   $$\tilde{v}^N_t(\tilde{y}_t^{s,y})=\bar{v}^N_t(\tilde{y}_t^{s,y}),\quad \mathbb{P}\otimes dy\text{-a.e.} \quad \forall \, 0\leq s\leq t\leq T,$$
   where we note that both $\tilde{v}^N$ and $\bar{v}^N$ belong to $\mathcal{H}^1\cap \cL^{2,\infty}_{\sF}(0,T)$.   Taking $s=t$, we have
$$\tilde{v}^N_t(y)=\bar{v}^N_t(y),\quad \mathbb{P}\otimes dy\text{-a.e.} \quad \forall t\in[0,T].$$
Since the BSPDE \eqref{BSPDE-sec7} has a unique solution we also obtain
\[
	(\tilde{v}^N,\tilde{\zeta}^N)=(\bar{v}^N,\bar{\zeta}^N)\quad \textrm{in } \mathcal{H}^1\times \cL^2_{\sF}(0,T;H^{1,2}(\bR^d)).
\]
The regularity properties of $\bar{v}^N$  imply that $\tilde{u}^N_t(y)$, $v^N_t(y)$ and $\tilde{v}^N_t(y)$ are all continuous in $(t,y)$ with probability~$1$. In view of the proof of Theorem \ref{thm-existence}, we have $\{\tilde{v}^N_t(y)\}$ converges increasingly to $\theta(y)\theta^{-1}{v}_t(y_t^y)$ for every $(t,y)\in[0,T]\times\bR^d$ with probability~$1$, as $N$ goes to infinity. Setting \[
(\tilde{v}_t(y),\tilde{\zeta}_t(y)):=\theta(y)((\theta^{-1}{v}_t)(y_t^y),(\theta^{-1}{\zeta}_t)(y_t^y)),
\] 
we obtain $(\tilde{v},\tilde{\zeta})1_{[0,\tau]}
\in \left(\mathcal{H}^1\cap \cL^p_{\sF}(0,T;H^{2,p}(\bR^d))\right)
\times \cL^2_{\sF}(0,T;H^{1,2}(\bR^d))$ for all $\tau\in(0,T)$ and $p\in (2,\infty)$, and
\[
	\frac{c_0{\theta}(y)}{T-t}\leq \tilde{v}_t(y) \leq \frac{c_1{\theta}(y)}{T-t},\quad \mathbb{P}\otimes dt \otimes dy\text{-a.e.}
\]
Moreover, for every $\tau\in(0,T)$, it holds almost surely 
\begin{multline*}
	\tilde{v}_t(y) = \tilde{v}_{\tau}(y) +\int_t^{\tau}\big\{\tr\left( \bar{a}_s(y_s^y) D^2 \tilde{v}_s(y)\right) +\bar{b}^{\mathcal{T}}_s(y)D\tilde{v}_s(y)+\bar{c}_s(y)\tilde{v}_s(y) +{\theta}(y)F(s,y_s^y,(\theta^{-1}\tilde{v}_s)(y))\big\}\,ds\\
	-\int_t^{\tau}\tilde{\zeta}_s(y)\, dW_{s}, \quad dy\text{-a.e.}
\end{multline*}
Again, by \cite[Propostion 6.4]{DuQiuTang10}, we further have \[\tilde{v}\in \cS^p_{\sF}([0,\tau];H^{1,p}(\bR^d))\cap \cL^p_{\sF}(0,\tau;H^{2,p}(\bR^d)), \quad p\in(2,+\infty),\] and thus, by Sobolev embedding theorem, $\tilde{v}\in \cS^p_{\sF}([0,\tau];C^{\delta}(\bR^d))$ for every $\delta\in(0,1)$. Therefore, both $\tilde{v}_t(y)$ and $u_t(y)=\theta^{-1}(y-\int_0^t\sigma_sdW_s)\tilde{v}_t(y-\int_0^t\sigma_sdW_s)$ are almost surely continuous in $(t,y)\in[0,\tau]\times\bR^d$. Hence,
\[
	V(t,y,x):=u_t(y)x^2,\quad (t,x,y)\in[0,T]\times \bR\times \bR^d,
\]
coincides with the value function of \eqref{value-func} for every $y\in\bR^d$.
\end{proof}


\begin{appendix}

\section{Three results on BSPDEs}
%
%
%

\begin{proposition}[{\cite[Theorem 5.5]{DuQiuTang10}}]  \label{prop-bspde-duqiutang}
Let the coefficients $b$, $\sigma$ and $\bar{\sigma}$ satisfy the assumptions $(\mathcal{A}1)-(\mathcal{A}3)$. Suppose that the random function
  $f(\cdot,\cdot,\cdot,\vartheta,y,z)\in \cL^2_{\sF}(0,T;L^2(\bR^d))$
 for any $(\vartheta,y,z)\in \bR\times\bR^{d}\times\bR^{ m}$ and that there exists a positive constant $L_0$ such that for all $(\vartheta_1,y_1,z_1),(\vartheta_2,y_2,z_2)\in \bR\times\bR^d\times\bR^{ m}$
   and $(\omega,t,x)\in \Omega\times[0,T]\times\bR^d$,
   \begin{equation*}
     \begin{aligned}
       |f(\omega,t,x,\vartheta_1,y_1,z_1)-f(\omega,t,x,\vartheta_2,y_2,z_2)|\leq\,& L_0(|\vartheta_1-\vartheta_2|+|y_1-y_2|+|z_1-z_2|).
     \end{aligned}
   \end{equation*}
Then, for any given $G\in L^2(\Omega,\sF_T;H^{k,2}(\bR^d))$ with $k\in\{0,1\}$, the BSPDE
\begin{equation} \label{BSPDE-dqiutang}
  \left\{\begin{aligned}
  		-du_t(x) & =\begin{aligned}[t]&\{\tr\left(a_t(x)D^2u_t(x)+ D\psi_t(x)\sigma^{\mathcal{T}}_t(x)\right)+ b^{\mathcal{T}}_t(x)D u_t(x)\\&+f(t,x,x_t(x),D  u_t(x),\psi_t(x))\}\,dt -\psi_t(x)\, dW_{t}, \quad (t,x)\in [0,T]\times \bR^d;\end{aligned}\\
    	u_T(x)& = G(x),  \quad x\in\bR^d,
    \end{aligned}\right.
\end{equation}
admits a unique solution $(u,\psi)\in \mathcal{H}^k\times \cL^2_{\sF}(0,T;H^{k,2}(\bR^d))$, i.e., it holds almost surely that
\begin{multline} \label{eq-defn-solution-A1}
		\left\langle\varphi,\,u_t\right\rangle= \langle\varphi,\,u_{T}\rangle+\int_t^{T}\Big\{\langle \varphi,\, \tr\left( a_sD^2 u_s + D \psi_s\sigma^{\mathcal{T}}_s\right)+b^{\mathcal{T}}_sD u_s + f(s,u_s,D  u_s,\psi_s)\rangle\Big\}\,ds \\-\int_t^{T}\left\langle\varphi,\,\psi_sdW_s\right\rangle \quad \forall\varphi\in C_c^{\infty}(\bR^d), \quad t\in[0,T],
\end{multline}
where $C_c^{\infty}(\bR^d)$ is the set of all the infinitely differentiable functions with compact supports on $\bR^d$. Moreover, $u\in\cL^{2,\infty}_{\sF}(0,T)$ if $k=1$, and there exists a constant $C$ that depends only on $\kappa$, $L$, $L_0$, $\Lambda$ and~$T$ such that
\begin{multline} \label{est-prop-dqiutang}
	\|u\|_{\mathcal{H}^k}+\|u\|_{\cL^{2,\infty}_{\sF}(0,T)}1_{k=1} +\|\psi\|_{\cL^2_{\sF}(0,T;H^{k,2}(\bR^d))}\\
	\leq C\left(\|f(\cdot,\cdot,\cdot,0,0,0)\|_{\cL^2_{\sF}(0,T;L^2(\bR^d))} +\|G\|_{L^2(\Omega,\sF_T;H^{k,2}(\bR^d))}\right).
\end{multline}
\end{proposition}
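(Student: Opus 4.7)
The plan is to establish this well-posedness result via a combination of energy estimates for an auxiliary linear BSPDE and a fixed-point argument in a weighted space. First I would treat the linear problem in which $f$ is replaced by a deterministic source $f_0\in \cL^2_{\sF}(0,T;L^2(\bR^d))$, then extend to the Lipschitz nonlinearity via contraction. For the linear case solutions can be constructed by Galerkin approximation: project onto a finite-dimensional subspace spanned by a Schauder basis of $H^{1,2}(\bR^d)$, solve the resulting finite-dimensional BSDE system by classical BSDE theory, and pass to the limit using compactness supplied by the a priori bounds below.

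The fundamental a priori estimate is obtained by applying It\^o's formula to $\|u_t\|_{L^2(\bR^d)}^2$ and integrating by parts in the $\tr(a\,D^2u)$ and $\tr(D\psi\,\sigma^{\mathcal{T}})$ terms. This produces an identity in which the expectation of
\begin{equation*}
\|u_t\|_{L^2}^2 + \int_t^T \langle Du_s,(2a_s)Du_s\rangle\,ds - 2\int_t^T \langle Du_s,\sigma_s^{\mathcal{T}}\psi_s\rangle\,ds + \int_t^T \|\psi_s\|_{L^2}^2\,ds
\end{equation*}
is bounded by $E\|G\|_{L^2}^2$ plus terms of order $\|u\|_{L^2}^2$ and the source contribution $\|f_0\|_{L^2}^2$. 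The critical step is coercivity of the quadratic form in $(Du,\psi)$: since $2a-\sigma\sigma^{\mathcal{T}}=\bar{\sigma}\bar{\sigma}^{\mathcal{T}}\geq \kappa I$ by $(\mathcal{A}3)$, completing the square recovers control of $\kappa\|Du\|^2 + \|\psi-\sigma^{\mathcal{T}}Du\|^2$, which together with Gronwall's inequality yields the $\mathcal{H}^0$-estimate for $u$ and the $L^2$-bound for $\psi$. For the case $k=1$, I would repeat the analysis on the spatial difference quotients $\Delta_h u$; these satisfy a BSPDE with commutator corrections arising from the spatial variation of $a,b,\sigma,\bar{\sigma}$, but the Lipschitz bounds in $(\mathcal{A}2)$ allow these corrections to be absorbed into the coercive part by Young's inequality. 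Uniform bounds in $h$ then deliver the $H^{1,2}$ regularity, and the $\cL^{2,\infty}_{\sF}$ bound follows from the Burkholder-Davis-Gundy inequality applied to the martingale term in the It\^o expansion of $\|u_t\|_{H^{1,2}}^2$.

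For the full nonlinear problem, define the map $\Phi:(u,\psi)\mapsto (\tilde{u},\tilde{\psi})$, where $(\tilde{u},\tilde{\psi})$ is the unique solution of the linear BSPDE with source $f(\cdot,\cdot,\cdot,u,Du,\psi)$. Applying the linear a priori estimate to the difference of two images of $\Phi$ and using the Lipschitz assumption on $f$ with constant $L_0$, one checks that $\Phi$ is a contraction on $\mathcal{H}^k\times \cL^2_{\sF}(0,T;H^{k,2}(\bR^d))$ equipped with the exponentially weighted norm $\|\cdot\| e^{-\beta t}$ for $\beta$ sufficiently large depending on $(\kappa,L,L_0,\Lambda)$. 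The unique fixed point is the desired solution, uniqueness in $\mathcal{H}^k$ is immediate from the same estimate, and \eqref{est-prop-dqiutang} reads off from the linear estimate evaluated at the fixed point.

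The principal obstacle is the coercivity step in the energy identity: the cross term $\mathcal{M}\psi=\tr(D\psi\,\sigma^{\mathcal{T}})$ couples $\psi$ with the spatial gradient of $u$, and closing the energy estimate requires the super-parabolicity condition $(\mathcal{A}3)$ on $\bar{\sigma}\bar{\sigma}^{\mathcal{T}}$ to dominate the cross term after completing the square. Carrying out the same argument at the level of first derivatives for $k=1$ while controlling the commutators with the spatial Lipschitz bounds in $(\mathcal{A}2)$, and tracking all constants so they depend only on $(\kappa,L,L_0,\Lambda,T)$, is the most delicate part of the proof.
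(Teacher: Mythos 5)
The paper does not prove this proposition at all: it is quoted verbatim from \cite{DuQiuTang10} (Theorem~5.5 there) and used as a black box in the appendix, so there is no in-paper argument to compare against. Your sketch reconstructs the standard $L^2$-theory for super-parabolic BSPDEs and the skeleton is correct: the identity $2a-\sigma\sigma^{\mathcal{T}}=\bar{\sigma}\bar{\sigma}^{\mathcal{T}}\geq\kappa I$ from $(\mathcal{A}3)$ is exactly the coercivity that closes the energy estimate after completing the square in $(Du,\psi)$, and linearisation plus a contraction in an exponentially weighted norm is the standard way to absorb the Lipschitz nonlinearity; up to presentational choices (Galerkin versus method of continuity or duality for the linear equation) this is the proof in the cited reference. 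Two points deserve more care than you give them. First, for $k=1$ the hypotheses provide no spatial regularity of $f$ in $x$ (only $f(\cdot,\cdot,\cdot,0,0,0)\in\cL^2_{\sF}(0,T;L^2(\bR^d))$ and Lipschitz continuity in $(\vartheta,y,z)$), so the difference quotient of the source term is not controlled termwise; the estimate \eqref{est-prop-dqiutang}, which has only the $L^2$-norm of the source on the right even when $k=1$, is recovered by summation by parts, $\langle\Delta_h u,\Delta_h f\rangle=-\langle\Delta_{-h}\Delta_h u,f\rangle$, followed by absorbing $\eps\|D\Delta_h u\|^2$ into the coercive term --- your phrase about absorbing ``commutator corrections'' covers the coefficients $a,b,\sigma,\bar{\sigma}$ but not this. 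Second, the space $\cL^{2,\infty}_{\sF}(0,T)$ carries the norm $E\int_{\bR^d}\sup_t|u(t,x)|^2\,dx$, with the supremum \emph{inside} the spatial integral; applying Burkholder--Davis--Gundy to the It\^o expansion of $\|u_t\|^2_{H^{1,2}}$ only yields the $\cS^2_{\sF}([0,T];H^{1,2}(\bR^d))$ bound. To get the $\cL^{2,\infty}$ bound one uses the $H^{2,2}$ regularity to read the equation as a scalar identity for $dx$-almost every $x$, applies BDG to the real-valued semimartingale $u_{\cdot}(x)$ pointwise in $x$, and then integrates over $x$. With these two amendments your argument goes through.
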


By using the standard denseness arguments, one can easily check that for $k=1$, the requirement by \eqref{eq-defn-solution-A1} with test functions for the definition of solution is equivalent to the corresponding one holding almost everywhere in Definition \ref{defn-solution}. The nonlinear term $f$ in Proposition \ref{prop-bspde-duqiutang}  can be rewritten in linear form as
\begin{equation} \label{eq-linearization}
	f(t,x,\vartheta,y,z)=\alpha \vartheta + \beta^{\mathcal T} y+\vartheta^{\mathcal T}z +f(t,x,0,0,0), \quad (\vartheta,y,z)\in \bR\times\bR^d\times\bR^m,
\end{equation}
where
\begin{align*}
   \alpha&=\frac{f(t,x,\vartheta,y,z)-f(t,x,0,y,z)}{\vartheta} 1_{\vartheta\neq 0};\\
   \beta^i&=\frac{f(t,x,0,y^{(i)},z)-f(t,x,0,y^{(i-1)},z)}{y_i} 1_{y_i\neq 0},\quad i=1,\ldots,d;\\
   \vartheta^k&=\frac{f(t,x,0,0,z^{(k)})-f(t,x,0,0,z^{(k-1)})}{z_k} 1_{z_k\neq 0},\quad k=1,\ldots,m;\\
   y^{(i)}&=(y_1,\ldots,y_i,0,\ldots,0),\quad y^{(0)}=0\in\bR^d,\quad i=1,\ldots,d;\\
   z^{(k)}&=(z_1,\ldots,z_k,0,\ldots,0),\quad z^{(0)}=0\in\bR^m,\quad k=1,\ldots,m.
\end{align*}
Thus, the comparison principle for linear BSPDEs \cite[Theorem 6.3]{DuQiuTang10} implies immediately the following result.

\begin{corollary}[{Corollary of \cite[Theorem 6.3]{DuQiuTang10}}]\label{cor-comprn-frm-DuQIUTang} 
  Under the hypothesis of Proposition~\ref{prop-bspde-duqiutang}, for $k=1$, suppose the pair $(G',f')$ satisfies the same conditions as $(G,f)$ in Proposition \ref{prop-bspde-duqiutang}. Let $(u,v)$ and $(u',v')$ be the respective solutions to the BSPDE \eqref{BSPDE-dqiutang} and  assume furthermore that for almost every $(\omega,t,x)\in \Omega\times[0,T]\times\bR^d$ it holds
\[
	f(\omega,t,x,u_t,D  u_t,v)\geq f'(\omega,t,x,u_t,D  u_t,v)\quad \textrm{and} \quad G(\omega,x) \geq G'(\omega,x).
\]
Then, $u\geq u'$,  $\mathbb{P}\otimes dt \otimes dx$-a.e.
\end{corollary}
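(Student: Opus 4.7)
The strategy is to reduce the nonlinear comparison to the linear comparison statement of \cite[Theorem 6.3]{DuQiuTang10} cited just above, via the linearization identity (\ref{eq-linearization}) applied to $f'$. Let me set $\bar{u}:=u-u'$, $\bar{v}:=v-v'$, and $\bar{G}:=G-G'$. Subtracting the two BSPDEs~\eqref{BSPDE-dqiutang} for $(u,v)$ and $(u',v')$, the pair $(\bar u,\bar v)$ satisfies, in the weak sense of \eqref{eq-defn-solution-A1},
\begin{equation*}
-d\bar u_t(x)=\bigl\{\tr(a_t D^2\bar u_t+D\bar v_t\sigma_t^{\mathcal T})+b_t^{\mathcal T}D\bar u_t+\Delta_t(x)\bigr\}\,dt-\bar v_t(x)\,dW_t,\quad \bar u_T=\bar G,
\end{equation*}
where the driver increment is
\begin{equation*}
\Delta_t(x):=f(t,x,u_t,Du_t,v_t)-f'(t,x,u'_t,Du'_t,v'_t).
\end{equation*}

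The key step is to split $\Delta_t$ in a way that separates the sign assumption on $f-f'$ from the Lipschitz behaviour of $f'$ in its last three arguments. The plan is to write
\begin{equation*}
\Delta_t(x)=\underbrace{\bigl[f(t,x,u_t,Du_t,v_t)-f'(t,x,u_t,Du_t,v_t)\bigr]}_{=:h_t(x)\,\geq\,0}+\bigl[f'(t,x,u_t,Du_t,v_t)-f'(t,x,u'_t,Du'_t,v'_t)\bigr],
\end{equation*}
where $h_t\geq 0$ by the standing assumption of the corollary. Now I apply the linearization scheme \eqref{eq-linearization} to $f'$ with base point $(u'_t,Du'_t,v'_t)$ and target $(u_t,Du_t,v_t)$: the Lipschitz assumption on $f'$ (inherited from $f$ with the same constant $L_0$) produces $\mathcal P\otimes\mathscr B(\mathbb R^d)$-measurable coefficients $\alpha_t(x)$, $\beta_t(x)\in\mathbb R^d$, $\gamma_t(x)\in\mathbb R^m$, each bounded by $L_0$, such that
\begin{equation*}
f'(t,x,u_t,Du_t,v_t)-f'(t,x,u'_t,Du'_t,v'_t)=\alpha_t(x)\bar u_t+\beta_t^{\mathcal T}(x)D\bar u_t+\gamma_t^{\mathcal T}(x)\bar v_t.
\end{equation*}
Substituting this back, $(\bar u,\bar v)$ solves a genuinely \emph{linear} BSPDE whose free term is $h_t(x)\geq 0$ and whose terminal value is $\bar G\geq 0$.

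The final step is simply to invoke \cite[Theorem 6.3]{DuQiuTang10} for this linear BSPDE: its comparison statement, applied to $(\bar u,\bar v)$ versus the trivial solution $(0,0)$ of the same linear equation with zero free term and zero terminal value, yields $\bar u\geq 0$, $\mathbb P\otimes dt\otimes dx$-a.e., which is the claim. The main technical point to verify, rather than a real obstacle, is that the linearization coefficients produced above are predictable and uniformly bounded so that they fall within the scope of Theorem 6.3; this is immediate from the quotient formulas in \eqref{eq-linearization} together with the global Lipschitz bound $L_0$ on $f'$. Uniqueness/existence of $(\bar u,\bar v)$ in $\mathcal H^1\times\mathcal L^2_{\sF}(0,T;H^{1,2}(\mathbb R^d))$ is not needed per se, since $(\bar u,\bar v)$ is defined explicitly as the difference of two solutions provided by Proposition~\ref{prop-bspde-duqiutang}.
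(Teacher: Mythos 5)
Your proposal is correct and follows essentially the same route as the paper, which (in the remark after the corollary) linearizes $f'$ along the segment between the two solutions via \eqref{eq-linearization} and reduces to the linear comparison theorem \cite[Theorem 6.3]{DuQiuTang10}; your explicit decomposition of the driver increment into a nonnegative free term $h_t$ plus a bounded linear form in $(\bar u, D\bar u,\bar v)$ is precisely the "standard and hence omitted" argument the authors have in mind.
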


The corollary can be verified by applying the linearization \eqref{eq-linearization} to the function
$$
\tilde{f}(t,x,\vartheta,y,z):=f'(\omega,t,x,u_t,D  u_t,v)-f'(t,x,u_t+\vartheta,D  u_t+y,v+z).
$$
The proof is standard and hence omitted. We close this appendix with the following lemma on an inequality for the positive part of the solutions to BSPDEs, whose proof will be sketched below.

\begin{lemma} \label{lemma-appendix-ito}
  Let $u\in \mathcal{H}^0$. Suppose that for any $\varphi\in C_c^{\infty}(\bR^d)$, almost surely
\begin{equation*} 
	\langle \varphi,\,u_t\rangle=\langle \varphi,\,G\rangle+\int_t^T\bigg\{\langle\varphi,\, h_s+f_s\rangle-\sum_{i=1}^d\langle\partial_{x^i}\varphi,\,g_s^i\rangle\bigg\}\,ds-\int_t^T\langle \varphi,\,\zeta_s\, dW_s\rangle, \quad t\in[0,T],
\end{equation*}
where $G\in L^2(\Omega,\sF_T,L^2(\bR^d))$;  $\zeta,f,g\in \cL^{2}_{\sF}(0,T;L^2(\bR^d))$ and $h\in \cL^1_{\sF}(0,T;L^1(\bR^d))$. Moreover, assume $ h_s(x)u^+_s(x) \leq 0$, $\mathbb{P}\otimes dt\otimes dx$-a.e. Then, it holds almost surely that
\begin{multline}\label{inequ-ito}
  \|u^+_t\|^2_{L^2(\bR^d)}+\int_t^{T}\|\zeta_s1_{u> 0}\|^2_{L^2(\bR^d)}\,ds
  \\\leq\|G^+\|^2_{L^2(\bR^d)}+ 2\int_t^{T}\bigg\{\langle u^+_s,\,f_s \rangle-\sum_{i=1}^d\langle \partial_{x^i}u_s^+,\,g_s^i \rangle\bigg\}\,ds - 2\int_t^{T}\langle u^+_s,\,\zeta_s\,dW_s\rangle, \quad t\in[0,T].
\end{multline}
\end{lemma}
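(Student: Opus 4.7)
The strategy is the classical mollification-plus-smooth-convex-approximation scheme used in the $L^2$ theory of (B)SPDEs: (i) mollify the equation in the spatial variable to obtain a classical real-valued backward SDE pointwise in $x$; (ii) apply the one-dimensional It\^o formula to a smooth convex approximation $\phi_\delta$ of $(\cdot^+)^2$; (iii) integrate over $x$, integrate by parts in the $g$-term, and send $\delta\downarrow 0$ followed by $\varepsilon\downarrow 0$; (iv) exploit the sign assumption $h_s u^+_s\le 0$ to discard the $h$-contribution, leaving \eqref{inequ-ito}.

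\textbf{Mollification and It\^o step.} Let $\rho_\varepsilon$ be a standard non-negative mollifier on $\bR^d$ and write $v^\varepsilon := v\ast_x\rho_\varepsilon$ for any spatial process $v$. Plugging $\varphi(\cdot)=\rho_\varepsilon(x-\cdot)$ into the weak identity yields, for each $x$, the pointwise scalar backward SDE
\begin{equation*}
u^\varepsilon_t(x) = G^\varepsilon(x)+\int_t^T\Big(h^\varepsilon_s+f^\varepsilon_s+\sum_{i=1}^d\partial_{x^i}g^{i,\varepsilon}_s\Big)(x)\,ds-\int_t^T\zeta^\varepsilon_s(x)\,dW_s,
\end{equation*}
in which every spatial derivative is classical. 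Choose smooth convex $\phi_\delta\ge 0$ with $\phi_\delta\equiv 0$ on $(-\infty,0]$, $\phi_\delta\uparrow(\cdot^+)^2$, $\phi_\delta'\uparrow 2(\cdot)^+$, $0\le\phi_\delta''\le 2$, and $\phi_\delta''\to 2\cdot 1_{\{\cdot>0\}}$ pointwise. Applying the classical It\^o formula to the scalar semimartingale $\phi_\delta(u^\varepsilon_t(x))$, integrating in $x$ by Fubini, and integrating by parts via
\begin{equation*}
\int_{\bR^d}\phi_\delta'(u^\varepsilon_s)\,\partial_{x^i}g^{i,\varepsilon}_s\,dx = -\int_{\bR^d}\phi_\delta''(u^\varepsilon_s)\,g^{i,\varepsilon}_s\,\partial_{x^i}u^\varepsilon_s\,dx
\end{equation*}
produces an equality analogous to \eqref{inequ-ito}, but with $\phi_\delta,\phi_\delta',\phi_\delta''$ replacing $(\cdot^+)^2, 2(\cdot)^+, 2\cdot 1_{\{\cdot>0\}}$, and an additional term $\int_t^T\langle\phi_\delta'(u^\varepsilon_s),h^\varepsilon_s\rangle\,ds$ on the right.

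\textbf{Passage to the limit.} Letting $\delta\downarrow 0$, monotone and dominated convergence give each term in the form stated in \eqref{inequ-ito}; the identity $1_{\{u^\varepsilon>0\}}\partial_{x^i}u^\varepsilon=\partial_{x^i}u^{\varepsilon,+}$ (valid for $H^{1,2}$ functions because $(\cdot)^+$ is Lipschitz) delivers the $g$- and $\zeta$-contributions exactly as in the statement. Next letting $\varepsilon\downarrow 0$, standard properties of mollification yield $G^\varepsilon\to G$ in $L^2(\Omega;L^2(\bR^d))$, $(f^\varepsilon,g^\varepsilon,\zeta^\varepsilon)\to(f,g,\zeta)$ in $\cL^2_\sF(0,T;L^2(\bR^d))$, $h^\varepsilon\to h$ in $\cL^1_\sF(0,T;L^1(\bR^d))$, and $u^\varepsilon\to u$ in $\mathcal{H}^0$; all $L^2\times L^2$ pairings pass to the limit by Cauchy-Schwarz, and the stochastic integral converges in $L^2(\Omega)$ by It\^o's isometry applied to $\langle u^{\varepsilon,+}-u^+,\zeta^\varepsilon\rangle+\langle u^+,\zeta^\varepsilon-\zeta\rangle$. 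The resulting identity is \eqref{inequ-ito} as an \emph{equality} with an extra $2\int_t^T\langle u^+_s,h_s\rangle\,ds$ on the right, and the pointwise sign $h_s u^+_s\le 0$ shows this contribution is non-positive; dropping it yields the stated inequality almost surely (first in a dense countable set of times, then in all of $[0,T]$ by c\`adl\`ag/continuity of the two sides).

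\textbf{Main obstacle.} The delicate point is the $h$-term under its weak $L^1$-regularity: Cauchy-Schwarz does not apply to $h^\varepsilon u^{\varepsilon,+}$, and the sign condition is not automatically preserved by mollification. The cleanest remedy is to avoid mollifying $h$ altogether --- it appears only as a right-hand side forcing, and no spatial regularity of $h$ is needed for the It\^o step. Keeping $h$ unmollified and using the pointwise bound $|\phi_\delta'(u^\varepsilon)\,h|\le 2|u^\varepsilon|\,|h|$ together with $u^\varepsilon\to u$ a.e.\ along a subsequence justifies dominated convergence, and the pointwise sign $h_s u^+_s\le 0$ then transfers exactly to the limit. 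Once this passage is in place the remainder is mechanical.
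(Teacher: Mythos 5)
Your overall strategy --- regularize the equation, apply It\^o's formula to a smooth convex approximation of $y\mapsto(y^+)^2$, integrate by parts in the $g$-term, and use the sign condition to discard the $h$-term --- is the same in spirit as the paper's, which however outsources the It\^o step to an existing formula for SPDEs (extended from a bounded domain to $\bR^d$) and to cited approximation arguments rather than redoing the mollification by hand. The one point where your write-up has a genuine gap is precisely the point that makes this lemma nontrivial, namely the forcing term $h\in\cL^1_{\sF}(0,T;L^1(\bR^d))$; if $h$ were in $\cL^2_{\sF}(0,T;L^2(\bR^d))$ the whole statement would follow from known $L^2$ results, as the paper observes.

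Concretely: your remedy of ``keeping $h$ unmollified'' is not implementable. The pointwise scalar equation for $u^\varepsilon_t(x)$ is obtained by testing against $\varphi=\rho_\varepsilon(x-\cdot)$, and this unavoidably produces $h^\varepsilon$, not $h$, in the drift; you cannot mollify some terms of the identity and not others. You are then forced to pass to the limit in $\int_t^T\langle\phi_\delta'(u^\varepsilon_s),h^\varepsilon_s\rangle\,ds$, and since the sign condition $h_su^+_s\le0$ is \emph{not} preserved under convolution, you cannot discard this term before $\varepsilon\downarrow0$. With your order of limits ($\delta\downarrow0$ first, so that $\phi_\delta'$ becomes the unbounded function $2(\cdot)^+$), the limit term is $\int\!\!\int (u^\varepsilon)^+h^\varepsilon$, a pairing of an $L^2$-convergent sequence against an $L^1$-convergent one, and your proposed dominating function $2|u^\varepsilon|\,|h|$ is not uniformly integrable (the envelope $\sup_\varepsilon|u^\varepsilon|$ is only controlled by a maximal function in $L^2$, and $L^2\cdot L^1\not\subset L^1$). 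The fix --- and this is what the paper's sketch actually does --- is to reverse the order: first establish the It\^o identity for $u$ itself (not $u^\varepsilon$) with $\Phi'$ and $\Phi''$ \emph{bounded}, so that $\langle\Phi'(u_s),h_s\rangle$ is well defined for $h_s\in L^1$ and, because $\Phi'(y)=\Phi'(y)1_{(0,\infty)}(y)\ge0$, the sign condition applied to $u$ gives $\langle\Phi'(u_s),h_s\rangle\le0$ pointwise; only after the $h$-term has been dropped at this stage do you remove the boundedness of $\Phi'$ and approximate $(\cdot^+)^2$. With that reordering your argument goes through; as written, the limit passage in the $h$-term is not justified.
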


{\em Sketch of the proof.} 
	The pair $(u,\zeta)$ is the unique solution in $\mathcal{H}^0\times \cL^{2}_{\sF}(0,T;L^2(\bR^d))$ to the linear BSPDE
\begin{equation*} 
  \left\{\begin{aligned}
  		-d{u}_t(x) & =\bigg\{ \Delta u_t(x)
  		 +f_t+h_t+\sum_{i=1}^d\partial_{x^i}(g^i_t-\partial_{x^i}u_t(x))\bigg\}\,dt -\zeta_t(x)\, dW_{t}, \quad (t,x)\in [0,T]\times \bR^d;\\
    	{u}_T(x)& = G(x),  \quad x\in\bR^d.
    \end{aligned}\right.
\end{equation*}
If $h\in \cL^{2}_{\sF}(0,T;L^2(\bR^d))$, then \eqref{inequ-ito} follows from \cite[Corollary 3.11]{QiuWei-RBSPDE-2013}. For $h\in \cL^1_{\sF}(0,T;L^1(\bR^d))$, it can be verified using a standard approximation method. To this end, we first observe that the proof of \cite[Proposition~2]{DenisMatoussi2009} of the It\^o formula for \textit{forward} SPDEs is independent of the boundedness of the domain~$\mathcal{O}$ therein and hence the result extends to $\mathcal{O}=\bR^d$. Thus, for any function $\Phi:\bR\rightarrow\bR$ with bounded derivatives $\Phi'$ and $\Phi''$ and $\Phi^\prime(0)=0$, it holds almost surely that
\begin{multline}\label{eq-ito-appendx}
	\int_{\bR^d}\Phi(u_t(x))\,dx+\frac{1}{2}\sum_{r=1}^m\int_t^T\langle \Phi''(u_s)\zeta^r_s,\, \zeta^r_s\rangle \,ds+\int_t^T\langle \Phi'(u_s),\,\zeta_s\, dW_s\rangle\\
	=\int_{\bR^d}\Phi(G(x))\,dx+\int_t^T\bigg\{\langle \Phi'(u_s),\, f_s+h_s\rangle  -\sum_{i=1}^d\langle\Phi^{\prime\prime}(u_s)\partial_{x^i}u_s,\,g_s^i\rangle\bigg\}\,ds, \quad t\in[0,T].
\end{multline}
If $\Phi'(y)=\Phi'(y)1_{(0,\infty)}(y)\geq 0$, then our assumptions on $h$ yield almost surely that
\begin{equation}\label{eq-ito-incr}
	\text{LHS of \eqref{eq-ito-appendx}}\leq\int_{\bR^d}\Phi(G(x))\,dx +\int_t^T\bigg\{\langle \Phi'(u_s),\, f_s\rangle\,ds -\sum_{i=1}^d\langle \Phi''(u_s)\partial_{x^i} u_s,\, g^{i}_s\rangle\bigg\}\,ds, \quad t\in[0,T].
\end{equation}
We can generalize the above inequality to  $\Phi'$ being unbounded, 
by approximating $\Phi$ and passing to the limit in \eqref{eq-ito-incr}. Then it remains to apply inequality \eqref{eq-ito-incr} to the function $\Psi:y\mapsto (y^+)^2$. Though $\Psi$ is not regular enough, this can be done using the same approximation method as in Step 2 of the proof of \cite[Lemma~3.5]{QiuTangMPBSPDE11}. \hfill \qed

\section{Proof of Corollary \ref{cor-ito-wentzell}}
  In Lemma~\ref{lem-Yang-Tang-13}, $\Phi$ can be seen as an $L^2({\bR^d})$-valued continuous semi-martingale. Thus, $\Phi\in \mathcal{H}^0$ and we can further verify that 
\[
	(\Phi,\Upsilon)\in \left(\mathcal{H}^0\cap \cL^2_{\sF}(0,T;H^{2,2}(\bR^d))\right) \times\cL^2_{\sF}(0,T;H^{1,2}(\bR^d))
\] 
satisfies \eqref{BSPDE-dqiutang} with
\[
	f(t,y):=F_t(y)-\left(\tr\left(a_t(y)D^2 \Phi_t(y) + D\Upsilon_t(y)\sigma^{\mathcal{T}}_t(y)\right) +b^{\mathcal{T}}_t(y)D \Phi_t(y)\right).
\]
Thus, $\Phi\in\cL^{2,\infty}_{\sF}(0,T)\cap \mathcal{H}^1$ by Proposition \ref{prop-bspde-duqiutang}. For each $N\in\bN$,  let $(u^N,\psi^N)\in \mathcal{H}^1\times\cL^2_{\sF}(0,T;H^{1,2}(\bR^d))$  be the unique solution to 
\begin{equation}\label{BSPDE-cor-1}
  \left\{\begin{aligned}
  		-du^N_t(y) &=\begin{aligned}[t]&\big\{\tr\left( a_t(y)D^2 u^N_t(y)+ D \psi^N_t(y)\sigma^{\mathcal{T}}_t(y)\right)+b^{\mathcal{T}}_t(y)D u^N_t(y) +N\wedge| F_t(y)| \big\}\,dt\\ &-\psi^N_t(y)\, dW_{t}, \quad (t,y)\in [0,T]\times \bR^d;\end{aligned}\\
    u^N_T(y)&= N\wedge |G(y)|,  \quad y\in\bR^d.
    \end{aligned}\right.
\end{equation}
By Lemma \ref{lem-Yang-Tang-13}, we have for almost every $y\in\bR^d$,
\begin{multline*}
    u^N_t(y_t^{s,y})
    = N\wedge |G(y_T^{s,y})| +\int_t^T N\wedge| F_r(y_r^{s,y})|\,dr -\int_t^T\Big\{\sigma_r^{\mathcal{T}}(y_r^{s,y})D  u^N_r(y_r^{s,y})+\psi^N_r(y_r^{s,y})\Big\}\,dW_r
     \\ - \int_t^T\bar{\sigma}_r^{\mathcal{T}}(y_r^{s,y})D  u^N_r(y_r^{s,y})\,dB_r,
\end{multline*}
where all the compositions are well defined under the measure $\mathbb{P}\otimes dt \otimes dy$. In particular,
\[
  u^N_s(y)=E^{\sF_s}\bigg\{ N\wedge |G(y_T^{s,y})|+ \int_s^TN\wedge |F_r(y_r^{s,y})|\,dr\bigg\},
\]
while Proposition \ref{prop-bspde-duqiutang} yields a constant $C$ depending only on $\kappa$, $L$, $\Lambda$ and $T$ such that
\begin{align*}
	\|u^N\|_{\cL^{2,\infty}_{\sF}(s,T)}+\|\psi^N\|_{\cL^2_{\sF}(s,T;H^{1,2}(\bR^d))} &\leq C\left(\|N\wedge |F|\|_{\cL^2_{\sF}(0,T;L^2(\bR^d))} +\|N\wedge |G|\|_{L^2(\Omega,\sF_T;H^{1,2}(\bR^d))}\right) \\
  &\leq  C \left(\| F\|_{\cL^2_{\sF}(0,T;L^2(\bR^d))}
   +\|G\|_{L^2(\Omega,\sF_T;H^{1,2}(\bR^d))}\right).
\end{align*}
Letting $N\rightarrow \infty$, by Fatou's lemma and Jensen's inequality, we obtain
\begin{equation*}
\int_{\bR^d}\bigg(E[|G(y_T^{s,y})|] +\int_s^T E[|F_r(y_r^{s,y})|]\,dr\bigg)^2dy\\\leq C \left( \|G\|^2_{L^2(\Omega,\sF_T;H^{1,2}(\bR^d))} +\|F\|^2_{\cL^2_{\sF}(s,T;L^2(\bR^d))}\right).
\end{equation*}
This proves the desired estimates as well as the fact that $\Phi_{\cdot}(y_{\cdot}^{s,y})$ is a continuous and uniformly integrable semi-martingale for almost every $y\in\bR^d$. \hfill \qed
\end{appendix}

\end{document}